\newcommand{\ben}{\begin{enumerate}}
\newcommand{\een}{\end{enumerate}}
\newcommand{\bqu}{\begin{quote}}
\newcommand{\equ}{\end{quote}}
\newcommand{\beq}{\begin{equation}}
\newcommand{\eeq}{\end{equation}}
\newcommand{\bec}{\begin{center}}
\newcommand{\ece}{\end{center}}
\numberwithin{equation}{section}
\newtheorem{theorem}{Theorem}[section]
\newtheorem{prop}[theorem]{Proposition}
\newtheorem{lemma}[theorem]{Lemma}
\DeclareMathOperator{\unif}{Uniform}
\theoremstyle{definition}
\newtheorem{defn}[theorem]{Definition}
\theoremstyle{remark}
\title[Percolative entropy over infinite regular trees]{Gibbs measures over locally tree-like graphs and percolative entropy over infinite regular trees}
\date{}
\author{Tim Austin}
\address{Tim Austin, \ Courant Institute of Mathematical Sciences, \ New York University, \ 251 Mercer Street, New York, NY 10012, United States.}
\email{tim@cims.nyu.edu.}
\author{Moumanti Podder}
\address{Moumanti Podder, \ Courant Institute of Mathematical Sciences, \ New York University, \ 251 Mercer Street, New York, NY 10012, United States.}
\email{mp3460@nyu.edu.}
\begin{document}
\bibliographystyle{plain}

\begin{abstract}
Consider a statistical physical model on the $d$-regular infinite tree $T_{d}$ described by a set of interactions $\Phi$. Let $\{G_{n}\}$ be a sequence of finite graphs with vertex sets $V_n$ that locally converge to $T_{d}$.  From $\Phi$ one can construct a sequence of corresponding models on the graphs $G_n$.  Let $\{\mu_n\}$ be the resulting Gibbs measures. Here we assume that $\{\mu_{n}\}$ converges to some limiting Gibbs measure $\mu$ on $T_{d}$ in the local weak$^*$ sense, and study the consequences of this convergence for the specific entropies $|V_n|^{-1}H(\mu_n)$. We show that the limit supremum of $|V_n|^{-1}H(\mu_n)$ is bounded above by the \emph{percolative entropy} $H_{perc}(\mu)$, a function of $\mu$ itself, and that $|V_n|^{-1}H(\mu_n)$ actually converges to $H_{perc}(\mu)$ in case $\Phi$ exhibits strong spatial mixing on $T_d$. We discuss a few examples of well-known models for which the latter result holds in the high temperature regime.
\end{abstract}

\subjclass[2010]{}

\keywords{}

\maketitle

\section{Introduction}

Many basic models of statistical physics are obtained by specifying an energy function on a configuration space of the form $A^V$, where $A$ is a finite set of possible `single-particle states' and $V$ is a set of particles.  Given the energy function, one asks after its partition function or free energy, or forms the associated Gibbs measure on $A^V$ and attempts to compute or estimate its marginals.

Many classical models, such as the Ising and Potts models, are constructed using a graph $G$ on $V$.  Such models can be defined consistently for many different graphs, and many important methods are based on comparing their behaviours over different graphs.

Let $d\geq 1$ be a fixed integer.  In this paper we study a class of models that can be defined consistently over either a finite $d$-regular graph or over the infinite $d$-regular tree.  In the latter setting the associated Gibbs states are defined by the Dobrushin--Lanford--Ruelle conditions, and there may be several of them.

The main focus of this paper is the connection between the Shannon entropies of the Gibbs measures on each of a sequence of large finite graphs, where the sequence converges locally to an infinite regular tree, and a notion of entropy that applies to a translation-invariant measure over the tree itself: percolative entropy, defined below.

We study models in Ruelle's thermodynamic formalism, and largely follow~\cite[Chapter 1]{ruelle} for notation and terminology. Another reference which includes the same background is~\cite{georgii}.

\par Suppose first that the set of particles $V$ is also finite. Let $\mathcal{F}$ be a family of subsets of $V$. An \emph{interaction} is a function $\Phi: \bigcup_{S \in \mathcal{F}} A^S \rightarrow [-\infty, \infty)$. The Gibbs measure $\mu^{\Phi}$ with respect to $\Phi$ is defined as
\begin{equation} \label{Gibbs}
\mu^{\Phi}(\xi) = \frac{1}{Z^{\Phi}} \exp\left\{\sum_{S \in \mathcal{F}} \Phi(\xi_{S})\right\}, \ \xi \in A^{V},
\end{equation}
where $\xi_S$ is the projection of $\xi$ to coordinates in $S$, and $Z^{\Phi}$ is the partition function defined as
\begin{equation} \label{partition function infinite}
Z^{\Phi} = \sum_{\xi \in A^{V}} \exp\left\{\sum_{S \in \mathcal{F}} \Phi(\xi_{S})\right\}.
\end{equation}
This Gibbs measure is well defined if at least one $\xi \in A^{V}$ has finite energy, i.e. $\sum_{S \in \mathcal{F}} \Phi(\xi_{S}) > -\infty$, and in this case $\mu$ is supported only on those configurations $\xi$ that have this property. We assume that at least one such $\xi$ exists, in the rest of the paper.

\par If $V$ is infinite, this direct definition is not available, since the sums inside the exponentials above are generally also infinite.  In that case we define associated Gibbs states as follows. Suppose now that $\mathcal{F}$ is a family of finite subsets of $V$ which is \emph{locally finite} (that is, every element of $V$ lies in only finitely many members of $\mathcal{F}$). Consider again an interaction $\Phi:\bigcup_{S\in \mathcal{F}}A^S\to [-\infty,\infty)$. In order to encompass models with hard-core constraints, we allow some sets $F$ in $\mathcal{F}$ and certain configurations $\xi_{F} \in A^{F}$ such that $\Phi(\xi_{F}) = -\infty$. Let $\Omega$ be the set of all configurations $\xi$ in $A^{V}$ such that they have \emph{finite local energies}, i.e. for every $F \in \mathcal{F}$, we have $\Phi(\xi_{F}) > -\infty$. It follows that in this case, for every positive integer $s$, we have $$\sum_{F \in \mathcal{F}, F \subseteq T_{d}(s)} \Phi(\xi_{F}) > -\infty.$$  Then a probability measure $\sigma$ on $A^V$ is a \emph{Gibbs state} for $\Phi$ if it is supported on $\Omega$ and for every finite $S\subseteq V$ one has
\begin{equation} \label{DLR}
\sigma_{S} = \int_{A^{V \backslash S}} \mu_{S, \eta} \ \sigma_{V\backslash S}(d \eta),
\end{equation}
where $\sigma_{S}$ is the image of $\sigma$ under the projection map from $A^V$ to $A^S$, and $\mu_{S, \eta}$ is defined as
\begin{equation}\label{DLR conditional}
\mu_{S, \eta}(\xi) \propto \exp\left\{\sum_{F\in \mathcal{F},\, F\subseteq S}\Phi(\xi_F) + \sum_{\substack{F \in \mathcal{F}\\ F \cap S \neq \emptyset, F \cap S^{c} \neq \emptyset}} \Phi((\xi\vee \eta)_F)\right\} \quad \forall \ \xi \in A^S.
\end{equation}
In the last formula, $\xi \vee \eta$ is the element in $A^V$ which equals $\xi$ when restricted to $S$ and equals $\eta$ when restricted to $S^{c}$. The expression in \eqref{DLR conditional} makes sense because we only consider $\eta \in \Omega_{S^{c}} \subseteq A^{S^{c}}$, where $\Omega_{S^{c}}$ is the projection of $\Omega$ onto $A^{S^{c}}$. The quantity $(\xi\vee\eta)_F$ is the restriction of $\xi \vee \eta$ to $F$. Since $\eta \in \Omega_{S^{c}}$, there exists at least one $\xi \in A^{S}$ for which the sum inside the exponential in \eqref{DLR conditional} is finite.

\par Our main result concerns the following setup. We consider the infinite $d$-regular tree $T_{d}$ and a sequence $\{G_{n}\}_{n \in \mathbb{N}}$ of finite graphs locally converging to $T_{d}$ (see Definition \ref{locally tree like}).  Let $\Phi$ be an interaction on $T_d$, and suppose that it has finite range and is invariant under all graph automorphisms of $T_d$.  Then there is a natural way to turn $\Phi$ into an interaction on each $G_n$: one identifies finite subsets of $G_n$ with finite subsets of $T_d$ using the graph structure, and then identifies the corresponding contributions to the interactions.  For instance, the Ising model on $T_{d}$ gives rise to the Ising model on each $G_{n}$ at the same inverse temperature.

This procedure for constructing the Hamiltonians is given precisely in Definition \ref{derived hamiltonian}.  In that definition and the rest of the paper, we also allow more general interactions that are invariant under fewer symmetries of $T_d$. Specifically, we identify $T_d$ with a Cayley graph of a certain free-product group $\Gamma$, and require that $\Phi$ be invariant only under translations, not necessarily under rooted automorphisms.  To transport an interaction with only these fewer symmetries from $T_d$ to the graphs $G_n$, one must endow each $G_n$ with a directed edge-labeling by the generators of $\Gamma$ (although, to be precise, the directions of the edges are relevant only if $d$ is odd).  This is used to specify the correct identification of subsets of $G_n$ with subsets of $T_d$. This is explained carefully in Section \ref{cayley, local weak*}.

In another sense, we also lose some generality as a result of imposing this extra structure. In case $d$ is odd, the group theoretic construction we use does not allow us to consider arbitrary $d$-regular graphs, but only those that are $1$-factorizable: see construction~\ref{perm Z mod 2} in Section~\ref{cayley, local weak*}.

\par In the case of allowing hard-core constraints, we also assume that for every $G_{n} = (V_{n}, E_{n})$ in the sequence, there exists some configuration $\xi \in A^{V_{n}}$ such that its energy is finite according to the interaction $\Phi$. This allows the Gibbs measure on $G_{n}$ associated with $\Phi$ to be well defined.

\par Having transported $\Phi$ to an interaction on each $G_n$, let $\mu_{n}$ be the Gibbs measure thus obtained on $G_{n}$. We shall make the assumption that the sequence $\{\mu_{n}\}$ converges in a local weak$^*$ sense to a Gibbs measure $\mu$ for $\Phi$ on $T_{d}$. In the case of full automorphism-invariance, the local weak$^*$ convergence of measures is taken from \cite{montanari-mossel-sly}.  The definition in our less symmetric setting appears previously in~\cite{austin-sofic}, and is recalled in Definition~\ref{local weak*} below.  We denote this convergence by $\mu_{n} \xrightarrow{lw^{*}} \mu$.

Assuming that $\mu_{n} \xrightarrow{lw^{*}} \mu$, our interest is in finding an expression for the asymptotic behaviour of the specific entropies $H(\mu_{n})/|V_{n}|$ in terms of $\mu$. The candidate that we study here is the \emph{percolative entropy} $H_{perc}(\mu)$.  To define this, let $\phi$ be a root of $T_d$ (later identified with the identity in the group $\Gamma$), and for each $p \in [0,1]$ let $\theta_p$ be the law of a random subset $S$ of $V(T_d)$ which contains each vertex independently with probability $p$. Then we define
\begin{equation}\label{percolative}
H_{perc}(\mu) := \int_{0}^{1} \int_{S \subseteq V(T_{d})} H_{\mu}\left(\sigma_{\phi} | \sigma_{S \backslash \{\phi\}}\right) \theta_{p}(dS) dp.
\end{equation}
In the integrand on the right, we consider a random configuration $\sigma \sim \mu$, and use the conditional Shannon entropy of its value at $\phi$ given its restriction to $S\setminus \{\phi\}$.  This formula is suggested by classical work in ergodic theory, where an analogous quantity is known to equal the Kolmogorov--Sinai entropy of a probability-preserving action of an amenable group~\cite[Theorem 3]{kieffer}.  (It has been compared with the newer notion of sofic entropy for actions of free groups, but we believe it does not agree with sofic entropy in such generality.)

\par Our main result shows that percolative entropy is always an upper bound for the growth rate of the finitary Shannon entropies, and that they are equal in case the model exhibits strong spatial mixing (recalled in Definition \ref{strong spatial mixing} below).

\begin{theorem} \label{main}
Let $\mu_{n}$ denote the Gibbs measure on $G_{n}$ derived from $\Phi$ for $n \in \mathbb{N}$, and assume $\mu_{n} \xrightarrow{lw^{*}} \mu$. Then
\begin{equation}\label{limsup}
\limsup_{n \rightarrow \infty} \frac{1}{|V_n|} H(\mu_{n}) \leq H_{perc}.
\end{equation}
If, moreover, $\Phi$ exhibits strong spatial mixing, then
\begin{equation}\label{limit}
\lim_{n \rightarrow \infty} \frac{1}{|V_n|} H(\mu_{n}) = H_{perc}.
\end{equation}
\end{theorem}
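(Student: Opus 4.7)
The plan is to derive an exact ``random-ordering'' expansion for $H(\mu_n)$ and compare its integrand term-by-term with that of $H_{perc}(\mu)$. Let $(U_v)_{v \in V_n}$ be i.i.d.\ Uniform$[0,1]$; ordering the vertices of $G_n$ by the values $U_v$ and applying the Shannon chain rule, then taking expectation over $U$, gives the identity
\[
\frac{1}{|V_n|} H(\mu_n) \;=\; \frac{1}{|V_n|} \sum_{v \in V_n} \int_0^1 \mathbb{E}_{S \sim \theta_p^{(n)}}\!\left[ H_{\mu_n}\bigl(\sigma_v \mid \sigma_{S \setminus \{v\}}\bigr)\right] dp,
\]
where $\theta_p^{(n)}$ denotes the law of Bernoulli$(p)$ site percolation on $V_n$. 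The point is that, conditionally on $U_v = p$, the set of predecessors of $v$ has the same distribution as $S \setminus \{v\}$ for $S \sim \theta_p^{(n)}$.

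For the upper bound \eqref{limsup}, I would fix $R \in \mathbb{N}$ and use monotonicity of entropy under additional conditioning,
\[
H_{\mu_n}\bigl(\sigma_v \mid \sigma_{S \setminus \{v\}}\bigr) \;\le\; H_{\mu_n}\bigl(\sigma_v \mid \sigma_{(S \cap B_R(v)) \setminus \{v\}}\bigr),
\]
where $B_R(v)$ is the ball of radius $R$ around $v$ in $G_n$. The right-hand side depends continuously on the joint law of $\sigma_{B_R(v)}$ under $\mu_n$. By the hypothesis $\mu_n \xrightarrow{lw^{*}} \mu$, for all but $o(|V_n|)$ many $v$ the ball $B_R(v)$ is (edge-labelled) isomorphic to $B_R(\phi) \subseteq T_d$, and the corresponding marginal of $\mu_n$ converges to that of $\mu$; the exceptional vertices contribute at most $o(1)\log|A|$ to the average. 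Sending $n \to \infty$ first and then $R \to \infty$ (using the martingale convergence of conditional entropies, together with Lebesgue dominated convergence in $p$), the right-hand side converges to $H_{perc}(\mu)$, yielding \eqref{limsup}.

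For the equality \eqref{limit} under strong spatial mixing, the strategy is to reverse the above inequality up to a uniform error. SSM should say that the conditional distribution of $\sigma_v$ given $\sigma$ on any set $T$ differs in total variation from the conditional given $\sigma_{T \cap B_R(v)}$ by at most $\varepsilon(R) \downarrow 0$, independently of $n$ and of the ambient configuration. Since the Shannon entropy is uniformly continuous on the simplex of probability measures on $A$, this converts into
\[
H_{\mu_n}\bigl(\sigma_v \mid \sigma_{(S \cap B_R(v)) \setminus \{v\}}\bigr) - H_{\mu_n}\bigl(\sigma_v \mid \sigma_{S \setminus \{v\}}\bigr) \;\le\; \varepsilon'(R),
\]
with $\varepsilon'(R) \to 0$ uniformly in $n$, $v$, and (typically) $S$. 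Inserting this bound into the random-ordering identity produces a lower bound for $\liminf_n |V_n|^{-1} H(\mu_n)$ that matches the upper limit, giving \eqref{limit}.

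The main obstacle lies in this final step. SSM is naturally phrased for conditioning on a deterministic configuration outside a ball, whereas here the conditioning set $S$ is random and the ``distant'' vertices $S \setminus B_R(v)$ may sit in the same $\mu_n$-correlated connected component as $v$ through paths outside $S$. I would therefore need a precise form of SSM for $\mu_n$, uniform in $n$ and in the random boundary data, leveraging both the decay of correlations guaranteed by $\Phi$'s high-temperature regime and the locally tree-like geometry of $G_n$ (so that almost every path from $v$ to $S \setminus B_R(v)$ must cross $\partial B_R(v)$). Establishing this uniform SSM for the sequence of finite Gibbs measures, in a form robust enough to survive integration against $S \sim \theta_p^{(n)}$, is the principal technical hurdle.
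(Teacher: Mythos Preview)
Your argument for the upper bound \eqref{limsup} is correct and is exactly the paper's proof: random ordering via i.i.d.\ uniforms, chain rule, truncation of the conditioning set to $B_R(v)$ by monotonicity, then local weak$^*$ convergence and $R\to\infty$.

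For the equality \eqref{limit}, you have correctly located the obstacle but not overcome it. You phrase strong spatial mixing as a statement about conditioning on the configuration outside a ball, and then observe that what you actually need is stability of $\mu_n(\sigma_v=\cdot\,|\,\sigma_T)$ under replacing an arbitrary set $T$ by $T\cap B_R(v)$. You leave this as ``the principal technical hurdle.'' The paper's resolution is the following trick, which you are missing: the definition of strong spatial mixing (Definition~\ref{strong spatial mixing}) requires the decay to hold \emph{uniformly over all finite self-interactions} $\Psi$. This uniformity is precisely what lets you encode conditioning on an arbitrary finite set inside the ball. Concretely, write $B=B(v,r)$ and $S_1=S\cap B$, fix $\eta\in A^{V_n}$, and take
\[
\Psi_u(a)=\begin{cases} c & \text{if } u\in S_1 \text{ and } a=\eta_u,\\ 0 & \text{otherwise,}\end{cases}
\]
so that $\Psi$ vanishes outside $S_1$. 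Letting $c\to\infty$ forces the spins on $S_1$ to equal $\eta_{S_1}$, and one obtains
\[
\mu_n\bigl(\sigma_v=a\,\big|\,\eta_{S_1},\tau_{B^c}\bigr)\;=\;\lim_{c\to\infty}\mu_{n,\Psi}\bigl(\sigma_v=a\,\big|\,\tau_{B^c}\bigr).
\]
Since the SSM bound on $|\mu_{n,\Psi}(\sigma_v=a\,|\,\tau_{B^c})-\mu_{n,\Psi}(\sigma_v=a\,|\,\tau'_{B^c})|$ is uniform in $\Psi$, it survives the limit $c\to\infty$, yielding
\[
\bigl|\mu_n(\sigma_v=a\,|\,\eta_{S_1},\tau_{B^c})-\mu_n(\sigma_v=a\,|\,\eta_{S_1},\tau'_{B^c})\bigr|\le\varepsilon(r)
\]
for all $\tau,\tau'$. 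Integrating $\tau_{B^c}$ against $\mu_n(\cdot\,|\,\eta_S)$ and $\tau'_{B^c}$ against $\mu_n(\cdot\,|\,\eta_{S_1})$ then gives exactly the estimate you want,
\[
\bigl|\mu_n(\sigma_v=a\,|\,\eta_S)-\mu_n(\sigma_v=a\,|\,\eta_{S\cap B})\bigr|\le\varepsilon(r),
\]
uniformly in $S$, $\eta$, and $n$ (for $v$ in a set of $U_n$-probability $1-o(1)$, using that $B(v,r+\ell)\cong T_d(r+\ell)$ to transfer the tree SSM to $G_n$). In short, the ``uniform in $\Psi$'' clause in the SSM definition is not decorative; it is the device that converts decay of boundary influence into the needed stability under conditioning on arbitrary subsets.
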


\par Several different definitions of strong spatial mixing already appear in the literature.  Most works define it only for one or another specific model.  In order to use the thermodynamic formalism, we give a definition for general interactions.

Consider again a graph $G = (V, E)$ and a finite set $A$. Let $\Phi$ be an interaction over $V$.  Another collection of functions $\Psi = \{\Psi_v:\ v\in V\}$, where $\Psi_v:A\to \mathbb{R}$, is called a \emph{self-interaction} (or, in some settings, an external field): it may be identified with an interaction with the special property that it is nonzero only for singleton subsets of $V$. Here we allow $\Psi$ to take only finite values. By $\Phi + \Psi$ we mean the interaction defined by
\[
 (\Phi + \Psi)(\xi_{S})= 
  \begin{cases} 
   \Phi({\xi_{v}}) + \Psi(\xi_{v}) & \text{if } S = \{v\}, \text{ for } v \in V, \\
   \Phi(\xi_{S})       & \text{if } |S| > 1,
  \end{cases}
\]
for $S \in \mathcal{F}$, where we abuse notation slightly by writing $\xi_{\{v\}}$ as $\xi_{v}$.  Let $T_d(r)$ denote the closed ball of radius $r$ about the root in the graph $T_d$, and let $T_d(r)^c$ denote its complement in $V(T_d)$.

\begin{defn} \label{strong spatial mixing}
A translation-invariant interaction $\Phi$ on $T_{d}$ exhibits \emph{strong spatial mixing} if the following condition is satisfied:
\begin{equation} \label{strong spatial eq}
\max_{\eta, \tau \in \Omega_{T_d(r)^c}} \left|\mu_{\Phi + \Psi, T_{d}(r), \eta}\left(\sigma_{\phi}=a\right) - \mu_{\Phi + \Psi, T_{d}(r), \tau}\left(\sigma_{\phi}=a\right)\right| \rightarrow 0 \text{ as } r \rightarrow \infty
\end{equation}
for every $a \in A$, uniformly in $\Psi$. Here $\mu_{\Phi + \Psi, T_{d}(r), \eta}$ (similarly $\mu_{\Phi + \Psi, T_{d}(r), \tau}$) is the conditional Gibbs measure corresponding to the interaction $\Phi + \Psi$ with boundary conditions $\eta$ (correspondingly $\tau$), as defined in \eqref{DLR conditional}.
\end{defn}
We consider $\eta$ and $\tau$ only in $\Omega_{T_d(r)^c}$ so that the conditional Gibbs measures in \eqref{strong spatial eq} are well defined.

\par Several well-known models exhibit strong spatial mixing.  These include the Ising and Potts models at sufficiently high temperatures.  Other examples are independent-set and various colouring models, again in their uniqueness regimes.  Some of these examples are discussed in more detail in Section~\ref{sec:egs}.

\section{Cayley graphs and local weak$^*$ convergence in the context of edge-coloured graphs}\label{cayley, local weak*}

Let $G_n = (V_n,E_n)$, $n\in\mathbb{N}$, be a sequence of $d$-regular graphs, and let $U_n$ be the uniform distribution on the vertex set $V_n$.  For $v \in V_n$ and $r \geq 0$, let $B(v,r)$ be the closed ball of radius $r$ about $v$ for the graph metric on $G_n$.

\begin{defn}\label{locally tree like}
We say that $\{G_{n}\}$ \emph{locally converges} to the regular infinite tree $T_{d}$ if for any $t \in \mathbb{N}$,
\begin{equation} \label{locally treelike eq}
\lim_{n \rightarrow \infty} U_{n}\left\{v:\ B(v, t) \cong T_{d}(t)\right\} = 1.
\end{equation}
\end{defn}

Once $n$ is large, this means that most vertices $v \in V_n$ are surrounded by a large-radius ball $B(v,r)$ which is isomorphic to the corresponding ball $T_d(r)$ in the $d$-regular tree.  This isomorphism is the basis for comparing a measure $\mu_n$ on $A^{V_n}$ (or rather its marginal on $A^{B(v,r)}$) to a limiting measure $\mu$ on $A^{T_d}$ (or rather its marginal on $A^{T_d(r)}$).

To make this comparison precise, we must choose a particular isomorphism from $B(v,r)$ to $T_d(r)$.  In case the limiting measure $\mu$ is invariant under rooted automorphisms of the tree, the choice is essentially irrelevant.  However, in this paper we allow a more generous class of models which do not have so much symmetry, and so need to be careful about choosing these neighbourhood-isomorphisms.

In order to do this, we enhance the graphs $G_n$ and the tree $T_d$ with some additional structure that makes the choice of isomorphism canonical.  As a result, our analysis also applies to some models that have less symmetry than $T_d$ itself.

The enhanced structure on our graphs is best described by identifying them with certain Cayley or Schreier graphs of groups, and then colouring each edge according to which generator produced it.  We first describe the identification of $T_d$ with a Cayley graph of a free group or free product.  We then describe the corresponding constructions for finite regular graphs. Background on Cayley graphs and free groups can be found in~\cite{harpe}.

We consider two different ways in which $T_{d}$ may be identified with a Cayley graph.  The first is simpler, but applies only when $d$ is even.  The second applies for any $d$.  
\begin{enumerate}[(1)]
\item \label{free group 1} Suppose $d = 2k$ is even. Consider the free group generated by $k$ generators, which we denote by $\Gamma = \langle s_{1}, s_{2}, \ldots s_{k} \rangle$. Each $s_{i}$ generates an infinite cyclic group. The elements of $\Gamma$ may be written as reduced words in $s_1$, \dots, $s_k$: that is, words of the form $s_{i_{1}}^{j_{1}} s_{i_{2}}^{j_{2}} \ldots s_{i_{m}}^{j_{m}}$ with $j_{1}, \ldots, j_{m}$ nonzero integers and $i_{t} \neq i_{t+1}$ for all $1 \leq t \leq m-1$. Two reduced words are multiplied by concatenating them and then removing cancellations until a new reduced word remains.  The identity element is the empty word, corresponding to $m=0$.

\par The Cayley graph $Cay(\Gamma)$ corresponding to the generators $s_1$, \dots, $s_k$ is a graph with vertex set $\Gamma$. In this paper we regard it as a directed graph with coloured edges in the following way. We connect the directed edge from $w$ to $w s_{i}$ for each $1 \leq i \leq k$ and each $w \in \Gamma$, and we assign the colour $s_{i}$ to this edge.

The underlying undirected graph of $Cay(\Gamma)$ is isomorphic to the $d$-regular tree.  We always use an isomorphism that maps the identity element of $\Gamma$ to the root of the tree, although this does not specify the isomorphism completely.  Given any particular choice of isomorphism, we may carry the directions and colours of the edges of $Cay(\Gamma)$ to $T_d$.  Having done so, each node of $T_d$ has $k$ edges coloured $s_{1}, \ldots s_{k}$ emerging out of it, and $k$ edges coloured $s_{1}, \ldots s_{k}$ entering into it.  This extra structure on $T_d$ then uniquely determines the graph isomorphism that we used.


\item \label{Z mod 2} For any $d \in \mathbb{N}$, let $\Gamma$ be the free product of the groups generated by $s_{1}, \ldots, s_{d}$, where $s_{i}^{2} = e$ for every $1 \leq i \leq d$. Here $e$ denotes the identity element. Then $\langle s_{i} \rangle$ is isomorphic to $\mathbb{Z}/2 \mathbb{Z}$ for each $i$. Elements of this group $\Gamma$ may be written as words of the form $s_{i_{1}} \ldots s_{i_{m}}$ for $m \in \mathbb{N} \cup \{0\}$, with $i_{t} \neq i_{t+1}$ for all $1 \leq t \leq m-1$. The group operation is concatenation followed by removal of cancellations, as in \ref{free group 1}.

\par The corresponding Cayley graph $Cay(\Gamma)$ is now an undirected edge-coloured graph on $\Gamma$. For every $w \in \Gamma$, $w$ is connected by an undirected edge to $w s_{i}$ for every $1 \leq i \leq d$, and that edge is assigned the colour $s_{i}$. There is no value in directing the edges of this Cayley graph, because we have $(ws_i)s_i = ws_i^2 = w$, so each undirected edge would simply be given both directions. The graph $Cay(\Gamma)$ is isomorphic to the infinite $d$-regular tree $T_{d}$. As before, we always map the identity element to the root, and then a particular choice of isomorphism is specified by the image of the colouring on $T_d$.  In this colouring, each vertex of $T_{d}$ is incident upon one edge of each of the colours $s_{1}, \ldots s_{d}$.


\end{enumerate}

Now consider a finite $d$-regular graph $G = (V,E)$.  Corresponding to \ref{free group 1} and \ref{Z mod 2} above, we consider two ways of producing $G$ from a collection of permutations of $V$.

\begin{enumerate}[(i)]
	\item \label{graph case 1} If $d = 2k$, consider $k$ permutations $\gamma^{(1)}, \ldots, \gamma^{(k)}$ of $V$. The set $E$ of edges consists of all pairs $\{u,v\}$ such that $\gamma^{(i)}(u) = v$ for some $i$. Each of these edges $\{u,v\}$ can now be labeled by the $\gamma^{(i)}$ that generated it, and also directed from $u$ to $v$.  It may happen that a single undirected edge receives multiple labels (in case $v = \gamma^{(i)}(u) = \gamma^{(j)}(u)$ for some $i\neq j$) or receives both directions (in case $\left(\gamma^{(i)}\right)^2(u) = u$). However, an arbitrary $d$-regular graph may be generated by $k$ permutations so that neither of these occurs: first, write the edge set as a union of $d$ families of disjoint cycles, and now choose one permutation that generates each of those cycle-families.  We will henceforth consider $d$-regular graphs endowed with the extra structure of these labels and directions.
	
Let $\Gamma$ be as in construction \ref{free group 1} above, and fix an identification of $Cay(\Gamma)$  with $T_d$. For any $v \in V$, the permutations $\gamma^{(1)}, \ldots, \gamma^{(k)}$ specify a canonical graph homomorphism from $T_d$ to $G$ such that $\phi$ is mapped to $v$.  To write it explicitly, we first define a group homomorphism $\gamma: \Gamma\to Sym(V)$ by setting $\gamma(s_i) = \gamma^{(i)}$.  Then the desired graph homomorphism is given by $g \mapsto \gamma(g)(v)$ for $g \in \Gamma$.
		
	\item \label{perm Z mod 2} For arbitrary $d$, we may repeat the construction above, but now use $d$ permutations $\gamma^{(1)}$, \dots, $\gamma^{(d)}$ which have order $2$: that is, $\left(\gamma^{(i)}\right)^2(u) = u$ for every $i$ and $u$. This produces a $d$-regular graph provided $\gamma^{(i)}(u) \neq \gamma^{(j)}(u)$ whenever $i\neq j$. We may then label each edge by the unique $\gamma^{(i)}$ which generated it.
	
	This construction of $G$ is equivalent to writing it as an edge-disjoint union of $d$ matchings. Unfortunately, not every $d$-regular graph may be constructed this way: those that can are called $1$-factorizable.  The Petersen graph is a well-known $3$-regular graph which is not $1$-factorizable.  So at this point our paper loses some generality.  We believe that other constructions could be found to cover this gap, but do not pursue them here.
	
	Given the order-$2$ permutations $\gamma^{(1)}$, \dots, $\gamma^{(d)}$ and a vertex $v \in V$, and letting $\Gamma$ be the free product of copies of $\mathbb{Z}/2\mathbb{Z}$ in construction \ref{Z mod 2} above, we obtain a canonical group homomorphism $\gamma: \Gamma\to Sym(V)$ by setting $\gamma(s_i) := \gamma^{(i)}$. Then a graph homomorphism from $T_d$ to $G$ which sends $\phi$ to $v$ is given by $g\mapsto \gamma(g)(v)$.
\end{enumerate}

\par We now discuss local weak$^*$ convergence of measures in the presence of this extra structure on our graphs.  Compared with \cite{montanari-mossel-sly}, our discussion is less general in one respect but more general in another.  On the one hand, we consider only local convergence to trees, rather than the more general limit graphs allowed in~\cite{montanari-mossel-sly}.  But on the other hand, the structures described above on our graphs and trees are not invariant under all automorphisms of $T_d$.  Nevertheless, no really new ideas are needed.

Consider a finite graph $G = (V,E)$ as described in \ref{graph case 1} or \ref{perm Z mod 2} above, and let $\gamma:\Gamma\to Sym(V)$ be the group homomorphism constructed there.  Let $A$ be any finite set. For any $\vec{x} \in A^V$, any $v \in V$, and any $r \in \mathbb{N}$, the \emph{pull-back name of radius $r$ of $\vec{x}$ at $v$} is defined as
\begin{equation}\label{pull back radius r}
\Pi_{v}^{\gamma, r}(\vec{x}) = \left(x_{\gamma(g)(v)}\right)_{g \in T_d(r)} \in A^{T_d(r)},
\end{equation}
where we identify $T_d(r)$ with the corresponding ball in $\Gamma$ in order to define $\gamma(g)$ for $g \in T_d(r)$. The \emph{pull-back name of $\vec{x}$ at $v$} is
\begin{equation}\label{pull back}
\Pi_{v}^{\gamma}(\vec{x}) = \left(x_{\gamma(g)(v)}\right)_{g \in T_d} \in A^{T_d},
\end{equation}
where we now identify $T_d$ with the whole of $\Gamma$.

Suppose now that $\{G_n\}_n$ is a sequence of $d$-regular graphs which converges locally to $T_d$, and let $\gamma_n: \Gamma\to Sym(V_n)$ be the corresponding sequence of group homomorphisms.  In the language of ergodic theory, such a sequence is an example of a sofic approximation to $\Gamma$: see, for instance,~\cite{austin-sofic} and the references given there.

\begin{defn}\label{local weak*}
Let $\mu_{n}$ be a probability measure on $A^{V_{n}}$ for each $n$, and let $\mu$ be a measure on $A^{T_d}$ which is invariant under the action of $\Gamma$ by left-translation of the coordinates. Let $U_{n}$ be the uniform distribution on $V_n$. Then $\{\mu_{n}\}$ is said to converge to $\mu$ in the \emph{local weak$^*$} sense, if 
\begin{equation} \label{local weak* eq}
\lim_{n \rightarrow \infty} U_{n}\left\{v:\ ||(\Pi_{v}^{\gamma_{n}, r})_\ast \mu_{n} - \mu_{T_d(r)}||_{TV} > \epsilon\right\} = 0 \quad \hbox{for every}\ r > 0,
\end{equation}
where $\mu_{T_d(r)}$ is the marginal of $\mu$ on $T_d(r)$.
\end{defn}

Definition~\ref{local weak*} gives the sense in which we will compare Gibbs measures over the finite graphs $G_n$ with Gibbs measures over $T_d$.  However, to know which Gibbs measures enter that comparison, we must also explain the correspondence between the Hamiltonians that generate them.

Thus, let $\Phi = (\Phi_F)_{F\in\mathcal{F}}$ be an interaction on the tree $T_d$, and assume that it has finite range and is invariant under all translations when $T_d$ is identified with $\Gamma$, one of the groups considered in \ref{free group 1} or \ref{Z mod 2} above.  Let $G = (V,E)$ be a $d$-regular graph endowed with edge colourings and (in case \ref{graph case 1}) edge directions, and let $\gamma:\Gamma\to Sym(V)$ be the resulting group homomorphism.  We next show how to construct an interaction on $G$ which corresponds to $\Phi$. We then apply this procedure to each $G_n$ in a sequence locally converging to $T_d$ in order to obtain the Gibbs measures $\mu_n$.

First, let $\mathcal{F}_0 \subseteq \mathcal{F}$ be a transversal for the translation-action of $\Gamma$ on the family of subsets $\mathcal{F}$. Since $\Phi$ has finite range, the family $\mathcal{F}_0$ is finite, and $\Phi$ is uniquely determined by the subfamily $(\Phi_F)_{F \in \mathcal{F}_0}$ by translation invariance.  The choice of transversal is a necessary part of our procedure for constructing new interactions on finite $d$-regular graphs.

\begin{defn}\label{derived hamiltonian}
We define the Hamiltonian $U_G^{\Phi}$ on $A^V$ as follows:
\begin{equation} \label{pull back hamiltonian}
U_G^{\Phi}(\sigma) = \sum_{v \in V} \sum_{F \in \mathcal{F}_0} \Phi\left(\left(\Pi_{v}^{\gamma}(\sigma)\right)_{F}\right).
\end{equation}
Here $\left(\Pi_{v}^{\gamma}(\sigma)\right)_{F}$ denotes the restriction of $\Pi_{v}^{\gamma}(\sigma) \in A^{V(T_{d})}$ to $F$. The Gibbs measure $\mu_G$ on $A^V$ defined by
\begin{equation} \label{pull back Gibbs}
\mu_G(\sigma) = Z_G^{-1} \exp \left\{U_G^{\Phi}(\sigma)\right\}, \ \text{ where } Z_G = \sum_{\sigma \in A^V} \exp \left\{U_G^{\Phi}(\sigma)\right\},
\end{equation}
is the \emph{Gibbs measure derived from $\Phi$}.
\end{defn}
This Gibbs measure is well defined if at least one $\sigma \in A^{V}$ has finite energy, i.e. $U_G^{\Phi}(\sigma) > -\infty$, and in this case $\mu$ is supported only on those configurations $\sigma$ that have this property. We assume that at least one such $\sigma$ exists in the rest of the paper.

\section{Proof of the main result}

We start with the proof of the upper bound~\eqref{limsup}.

\begin{proof}[Proof of Theorem~\ref{main}, first part]
For any enumeration of $V_n$, say $v_{1}, \ldots, v_{n}$, the chain rule for Shannon entropy gives
\begin{equation} \label{eq 1}
H(\mu_{n}) = \sum_{i=1}^{n} H_{\mu_{n}}\left(\sigma_{v_{i}}|\sigma_{v_{1}}, \ldots \sigma_{v_{i-1}}\right).
\end{equation}

We will use the average of these equalities over a uniform random choice of the enumeration.  To generate such a random ordering, fix one enumeration ${v_1, \dots, v_n}$, and let $X_{1}, \ldots, X_{n}$ be $n$ independent random variables with distribution $\unif[0,1]$. The values of $X_1$, \dots, $X_n$ are almost surely distinct, and in that case the order of those values in $[0,1]$ specifies an ordering of the indices $1$, \dots, $n$.  It is uniformly random among all such orderings.  Using the corresponding re-ordering of $v_1$, \dots, $v_n$, and taking an expectation, \eqref{eq 1} gives
\begin{align} \label{step 1}
\frac{1}{|V_n|} H(\mu_{n}) =& \frac{1}{|V_n|} E_{X_{1}, \ldots, X_{n}}\left[\sum_{i=1}^{n} H_{\mu_{n}}\left(\sigma_{v_{i}}|\{\sigma_{v_{j}}: X_{j} < X_{i}\}\right)\right] \nonumber\\
=& \frac{1}{|V_n|} \sum_{i=1}^{n} E_{X_{1}, \ldots, X_{n}}\left[H_{\mu_{n}}\left(\sigma_{v_{i}}|\{\sigma_{v_{j}}: X_{j} < X_{i}\}\right)\right] \nonumber\\
=& \frac{1}{|V_n|} \sum_{i=1}^{n} E_{X_{i}} \left[E_{X_{1}, \ldots X_{i-1}, X_{i+1}, \ldots X_{n}}\left[H_{\mu_{n}}\left(\sigma_{v_{i}}|\{\sigma_{v_{j}}: X_{j} < X_{i}\}\right)\right]\right].
\end{align}
Beware that the summation from $1$ to $n$ in~\eqref{step 1} is in the fixed order of $v_1$, \dots, $v_n$, not the order specified by the values $X_i$.

Let us write $\theta_{p}$ for the law of a random subset of any given ground set which contains each vertex independently with probability $p$. The particular ground set in question will always be specified separately. When we condition on $X_{i} = p$, the set $\{v_{j} : X_{j} < X_{i}\} = \{v_{j} : X_{j} < p\}$ has the same distribution as $S\setminus v_i$ when $S$ is drawn from $\theta_p$ with ground set $V_n$. Hence \eqref{step 1} leads to:
\begin{align} \label{step 2}
\frac{1}{|V_n|} H(\mu_{n}) =& \frac{1}{|V_n|} \sum_{i=1}^{n} \int_{0}^{1} \int_{S \subseteq V_{n}} H_{\mu_{n}}\left(\sigma_{v_{i}}|\sigma_{S \backslash v_{i}}\right) d\theta_{p} dp \nonumber\\
=& \int_{0}^{1} \int_{S \subseteq V_{n}} E_I\left[H_{\mu_{n}}\left(\sigma_{I}|\sigma_{S \backslash I}\right)\right] d\theta_{p} dp,
\end{align}
where $I$ is a random vertex with distribution $U_{n}$.

\par Now let $r \in \mathbb{N}$ be arbitrary.  By the data-processing inequality, we have
\begin{equation}
H_{\mu_{n}}(\sigma_{I}|\sigma_{S \backslash I}) \leq H_{\mu_{n}}(\sigma_{I}|\sigma_{(S\cap B(I,r)) \backslash I}).
\end{equation}
Substituting into~\eqref{step 2} this gives
\begin{align} \label{step 3}
\frac{1}{|V_n|} H(\mu_{n}) & \leq \int_{0}^{1} \int_{S \subseteq V_{n}} E_I\left[H_{\mu_{n}}\left(\sigma_{I}|\sigma_{(S \cap B(I, r)) \backslash I}\right)\right] d\theta_{p} dp \nonumber\\
&= \int_{0}^{1} E_I\left[ \int_{S \subseteq B(I, r) \backslash I} H_{\mu_{n}}\left(\sigma_{I}|\sigma_{S}\right) d\theta_{p} \right]dp.
\end{align}

\par Let $\widetilde{V}_{n}^{(r)}$ be the subset of vertices $v$ of $G_{n}$ such that $B(v, r) \cong T_{d}(r)$.  As described in Section~\ref{cayley, local weak*}, if this isomorphism exists, it may be chosen uniquely so as to respect the colours and directions of the edges.  If $v \in \widetilde{V}_n^{(r)}$, then we may use that isomorphism to write
\begin{equation}\label{local-ent-int}
\int_{S \subseteq B(v, r) \backslash v} H_{\mu_{n}}(\sigma_v|\sigma_{S}) d\theta_p = \int_{S \subseteq T_d(r) \backslash \phi}H_{(\Pi^{\gamma_n,r}_v)_\ast\mu_{n}}(\sigma_\phi|\sigma_{S}) d\theta_p.
\end{equation}

Since $G_n$ converges locally to $T_d$, we have $U_n\left(\widetilde{V}_n^{(r)}\right) = 1-o(1)$.  We may therefore substitute~\eqref{local-ent-int} into~\eqref{step 3} to obtain
\begin{equation}\label{bound}
\frac{1}{|V_n|} H(\mu_{n}) \leq \int_0^1E_I\left[\int_{S \subseteq T_d(r) \backslash \phi}H_{(\Pi^{\gamma_n,r}_I)_\ast\mu_{n}}(\sigma_\phi|\sigma_{S}) d\theta_p \right]dp + o(1).
\end{equation}

For each set $S\subseteq T_d(r)\setminus \phi$, the expression $H_\nu(\sigma_\phi|\sigma_{S})$ is a continuous function of the measure $\nu$ on $A^{T_d(r)\setminus \phi}$.  It is therefore uniformly continuous, since the space of these measures is compact.  The same therefore holds for the expression
\[\int_{S \subseteq T_d(r) \backslash \phi}H_\nu(\sigma_\phi|\sigma_{S}) d\theta_p,\]
since it is an average of finitely many such functions.  This expression is also always bounded by $\log |A|$ which is the maximum Shannon entropy among all distributions on $A$.

Since $\mu_n$ converges in the local weak$^\ast$ sense to $\mu$, we have
\begin{equation}
U_n\Big\{\big\|(\Pi^{\gamma_n,r}_v)_\ast\mu_n - \mu_{T_d(r)}\big\|_{TV} \leq \delta\Big\} = 1 - o(1)
\end{equation}
as $n\to\infty$ for any $\delta > 0$.  In light of the continuity described above, this implies that
\begin{multline*}
U_n\left\{v:\ \left|\int_{S \subseteq T_d(r) \backslash \phi}H_{(\Pi^{\gamma_n,r}_v)_\ast\mu_n}(\sigma_\phi|\sigma_{S}) d\theta_p - \int_{S \subseteq T_d(r) \backslash \phi}H_{\mu_{T_d(r)}}(\sigma_\phi|\sigma_{S}) d\theta_p\right| \leq \epsilon\right\} \\= 1 - o(1)
\end{multline*}
as $n\to\infty$ for any $\epsilon > 0$.  Substituting into the right-hand side of~\eqref{bound}, this gives
\begin{align*}
\frac{1}{|V_n|} H(\mu_{n}) &\leq \int_0^1\int_{S \subseteq T_d(r) \backslash \phi}H_{\mu_{T_d(r)}}(\sigma_\phi|\sigma_{S}) d\theta_p dp + o(1)\\
&= \int_0^1\int_{S \subseteq V(T_d)}H_\mu(\sigma_\phi|\sigma_{(S\cap T_d(r))\backslash \phi}) d\theta_p dp + o(1).
\end{align*}

For each fixed $r$ we may let $n\to\infty$ above to obtain
\[\limsup_{n\to\infty}\frac{1}{|V_n|}H(\mu_n) \leq \int_0^1\int_{S \subseteq V(T_d)}H_\mu(\sigma_\phi|\sigma_{(S\cap T_d(r))\backslash \phi}) d\theta_p dp.\]
Finally, letting $r \to \infty$, the integrand $H_\mu(\sigma_\phi|\sigma_{(S\cap T_d(r))\backslash \phi})$ is always bounded by $\log |A|$ and decreases pointwise to $H_\mu(\sigma_\phi|\sigma_{S\backslash \phi})$ as a function of $S$, so the above estimate turns into
\begin{equation*}
\limsup_{n\to\infty}\frac{1}{|V_n|}H(\mu_n) \leq \int_0^1\int_{S \subseteq V(T_d)}H_\mu(\sigma_\phi|\sigma_{S\backslash \phi}) d\theta_p dp = H_{perc}(\mu).
\end{equation*}
\end{proof}

The proof of~\eqref{limit} is more delicate.  We begin with some more notation and a couple of preparatory lemmas.

First, let $\Omega_{n}$ denote the set of all configurations $\xi$ over $G_{n}$ such that $U_{n}^{\Phi}(\xi) > -\infty$, and for any $W \subseteq V_n$ let $\Omega_{n,W}$ be set of all $\xi \in A^W$ which have at least one extension in $\Omega_n$. Also, for any $r\in \mathbb{N}$, let
\[\widetilde{V}_{n}^{(r)} = \big\{v \in V_{n}: B(v, r) \cong T_{d}(r)\big\}\]
The local convergence of $\{G_n\}$ to $T_d$ implies that $U_n\left(\widetilde{V}_{n}^{(r)}\right) = 1 - o(1)$ as $n\to\infty$ for every $r$.

\begin{lemma}\label{intermediate lemma 1}
Let $r,\l \in \mathbb{N}$. If $v \in \widetilde{V}_{n}^{(r+l)}$ and $\eta \in \Omega_{n,B(v,r)^c}$, then there exists $\xi \in \Omega_{T_d(r)^c}$ such that
\begin{equation}\label{xi sigma}
\xi_{T_d(r)^c\cap T_d(r+l)} = \eta_{B(v, r)^c\cap B(v,r+l)}
\end{equation}
up to the identification of the index sets given by the isomorphism $B(v, r+l) \cong T_{d}(r+l)$.
\end{lemma}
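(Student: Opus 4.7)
My plan is to realize $\xi$ as the restriction to $T_d(r)^c$ of a single configuration $\xi'$ on the whole tree, obtained by globally pulling back an extension of $\eta$ from $G_n$ through the canonical homomorphism rooted at $v$. Since $\eta \in \Omega_{n,B(v,r)^c}$, it extends by definition to some $\eta' \in \Omega_n$; I would fix any such $\eta'$ and set
\[
\xi'_w := \eta'_{\gamma_n(w)(v)} \qquad \text{for every } w \in \Gamma \cong V(T_d),
\]
and then take $\xi := \xi'|_{T_d(r)^c}$.

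The agreement~\eqref{xi sigma} is immediate from the hypothesis $v \in \widetilde{V}_n^{(r+l)}$. As described at the end of Section~\ref{cayley, local weak*}, that hypothesis ensures the map $w \mapsto \gamma_n(w)(v)$ restricts to the canonical colour- and direction-preserving graph isomorphism $T_d(r+l) \to B(v,r+l)$ that is used in Definition~\ref{local weak*}. Under this isomorphism the tree annulus $T_d(r)^c \cap T_d(r+l)$ corresponds exactly to the graph annulus $B(v,r)^c \cap B(v,r+l)$, on which $\eta' = \eta$ by construction, so~\eqref{xi sigma} holds.

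The substantive step is to show $\xi \in \Omega_{T_d(r)^c}$, for which it suffices to check that the global pullback $\xi'$ already belongs to $\Omega$. Concretely, I would verify that $\Phi(\xi'_F) > -\infty$ for every $F \in \mathcal{F}$ by matching each such term to a summand of the derived Hamiltonian~\eqref{pull back hamiltonian} evaluated at $\eta'$. Writing $F = g F_0$ via the transversal $\mathcal{F}_0$, translation-invariance of $\Phi$ together with the way $\gamma_n$ interacts with multiplication in $\Gamma$ (which is precisely what Definition~\ref{derived hamiltonian} is designed to respect) identifies $\Phi(\xi'_F)$ with the summand $\Phi\bigl((\Pi^{\gamma_n}_{v'}(\eta'))_{F_0}\bigr)$ at $v' := \gamma_n(g)(v) \in V_n$. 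Since $\eta' \in \Omega_n$ forces $U^\Phi_{G_n}(\eta') > -\infty$ and each summand of~\eqref{pull back hamiltonian} takes values in $[-\infty,\infty)$, every individual summand is finite, hence so is $\Phi(\xi'_F)$.

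The only real obstacle is the bookkeeping in this last step: one must check that every finite-range interaction of $\Phi$ on $V(T_d)$ evaluated on the pullback $\xi'$ does coincide with some summand of~\eqref{pull back hamiltonian} evaluated at $\eta'$, and this requires tracking the group-theoretic conventions of Section~\ref{cayley, local weak*} carefully. Everything else --- the existence of $\eta'$, the agreement on the annulus, and the passage from $\xi' \in \Omega$ to $\xi \in \Omega_{T_d(r)^c}$ --- is immediate from the definitions.
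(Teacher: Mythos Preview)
Your proposal is correct and follows essentially the same approach as the paper: define $\xi$ as the restriction to $T_d(r)^c$ of the global pullback $\Pi_v^{\gamma_n}(\eta')$ of an extension $\eta' \in \Omega_n$, read off~\eqref{xi sigma} from the ball isomorphism, and verify $\xi' \in \Omega$ by writing each $F \in \mathcal{F}$ as a translate of some $F_0 \in \mathcal{F}_0$ and using the homomorphism property of $\gamma_n$ to identify $\Phi(\xi'_F)$ with a summand of $U^\Phi_{G_n}(\eta')$. The only discrepancy is cosmetic: the paper writes $F = F_0 g$ rather than your $F = gF_0$, which is exactly the ``group-theoretic convention'' bookkeeping you flagged; your argument that finiteness of the total Hamiltonian forces finiteness of each summand is slightly more explicit than the paper's one-line conclusion.
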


\begin{proof}
Let $\sigma \in \Omega_n$ be any extension of $\eta$, and let $\xi = \big(\Pi_{v}^{\gamma_{n}}(\sigma)\big)_{T_d(r)^c}$, as defined in \eqref{pull back}. Then~\eqref{xi sigma} follows at once from the isomorphism of $B(v, r+l)$ and $T_{d}(r+l)$.

To complete the proof we show that $\Pi_{v}^{\gamma_{n}}(\sigma) \in \Omega$, since this implies $\xi \in \Omega_{T_d(r)^c}$. Consider any $F \in \mathcal{F}$. Then we can write $F$ as $F_{0}g$ for some $g \in \Gamma$ and $F_{0} \in \mathcal{F}_{0}$, where $\mathcal{F}_{0}$ is the transversal defined before Definition \ref{derived hamiltonian}. Then 
\begin{align}
\left(\Pi_{v}^{\gamma_n}(\sigma)\right)_{F} 
&= \left(\sigma_{\gamma_{n}(h)(v)}\right)_{h \in \mathcal{F}} \nonumber\\
&= \left(\sigma_{\gamma_{n}(h g) (v)}\right)_{h \in \mathcal{F}_{0}} \nonumber\\
&= \left(\sigma_{\gamma_{n}(h) \gamma_{n}(g) (v)}\right)_{h \in \mathcal{F}_{0}} \nonumber\\
&= \left(\Pi_{\gamma_{n}(g)(v)}^{\gamma_{n}}(\sigma)\right)_{F_{0}}, \nonumber
\end{align}
Since $\sigma \in \Omega_{n}$, it follows that $\Phi\Big(\big(\Pi_{v}^{\gamma_n}(\sigma)\big)_{F} \Big) > -\infty$.
\end{proof}

\begin{lemma}\label{intermediate lemma}
If $\Phi$ exhibits strong spatial mixing, then for any $\epsilon > 0$ there is an $r \in \mathbb{N}$ such that
\begin{equation} \label{int lem eq}
U_n\left\{v:\ \max_{S\subseteq V_n\setminus v,\,\tau \in A^{V_n},\,a \in A}\left|\mu_{n}\left(\sigma_{v} = a|\tau_{S}\right) - \mu_{n}\left(\sigma_{v} = a|\tau_{S \cap B(v, r)}\right)\right| \leq \epsilon\right\} \to 1
\end{equation}
as $n\to\infty$.
\end{lemma}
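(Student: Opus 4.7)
The plan is to transfer the strong spatial mixing bound from $T_d$ to $G_n$ at vertices whose neighbourhood is tree-like, using the edge-coloured isomorphism from Section~\ref{cayley, local weak*} and Lemma~\ref{intermediate lemma 1}. Let $R_0\in\mathbb{N}$ be chosen so that every $F\in\mathcal{F}_0$ lies inside $T_d(R_0)$. First, by Definition~\ref{strong spatial mixing}, I choose $r\in\mathbb{N}$ so large that
\[
\max_{\xi_1,\xi_2\in\Omega_{T_d(r)^c}}\bigl|\mu_{\Phi+\Psi,T_d(r),\xi_1}(\sigma_\phi=a)-\mu_{\Phi+\Psi,T_d(r),\xi_2}(\sigma_\phi=a)\bigr|\leq\epsilon
\]
uniformly in $\Psi$ and $a\in A$, and I set $r':=r+2R_0$. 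Since $U_n(\widetilde V_n^{(r')})\to 1$, it is enough to prove the bracketed inequality in~\eqref{int lem eq} for every $v\in\widetilde V_n^{(r')}$.

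Fix such a $v$ along with $S\subseteq V_n\setminus v$, $\tau\in A^{V_n}$ and $a\in A$, and split $S=S_1\cup S_2$ with $S_1:=S\cap B(v,r)$. Introducing
\[
f(\eta):=\mu_n\bigl(\sigma_v=a\,\bigm|\,\tau_{S_1},\,\sigma_{B(v,r)^c}=\eta\bigr)\qquad\text{for }\eta\in\Omega_{n,B(v,r)^c},
\]
I note that both $\mu_n(\sigma_v=a\mid\tau_S)$ and $\mu_n(\sigma_v=a\mid\tau_{S_1})$ are averages of $f$ against (different) conditional laws of $\sigma_{B(v,r)^c}$; hence their absolute difference is at most $\max_{\eta_1,\eta_2}|f(\eta_1)-f(\eta_2)|$. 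The central step is then to identify $f(\eta)$ with a quantity on the tree. Because $v\in\widetilde V_n^{(r')}$ and each $F\in\mathcal{F}_0$ is contained in $T_d(R_0)$, any pair $(u,F)$ contributing to~\eqref{pull back hamiltonian} for which $\{\gamma_n(g)(u):g\in F\}$ intersects $B(v,r)$ has its full support contained in $B(v,r')$. Under the edge-coloured isomorphism $B(v,r')\cong T_d(r')$ that sends $v$ to $\phi$, the local specification determining $\sigma_{B(v,r)}$ therefore pulls back to $\mu_{\Phi,T_d(r),\xi}$, where by Lemma~\ref{intermediate lemma 1} we may choose $\xi\in\Omega_{T_d(r)^c}$ agreeing with the image of $\eta$ on the annulus $T_d(r')\setminus T_d(r)$. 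Writing $S_1'\subseteq T_d(r)\setminus\phi$ and $\tau'_{S_1'}\in A^{S_1'}$ for the images of $S_1$ and $\tau_{S_1}$, this yields
\[
f(\eta)=\mu_{\Phi,T_d(r),\xi}\bigl(\sigma_\phi=a\,\bigm|\,\sigma_{S_1'}=\tau'_{S_1'}\bigr).
\]

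To finish, I absorb the conditioning on $\sigma_{S_1'}=\tau'_{S_1'}$ into an external field. Define $\Psi^{(M)}$ to vanish off $S_1'$, and for each $w\in S_1'$ to satisfy $\Psi^{(M)}_w(\tau'_w)=0$ and $\Psi^{(M)}_w(b)=-M$ for $b\neq\tau'_w$. Then $\mu_{\Phi+\Psi^{(M)},T_d(r),\xi}(\sigma_\phi=a)\to f(\eta)$ as $M\to\infty$. Since the strong spatial mixing bound in the first paragraph holds uniformly in $\Psi$, applying it with $\Psi=\Psi^{(M)}$ and with $\xi_1,\xi_2$ the tree boundary conditions corresponding to $\eta_1,\eta_2$, and then letting $M\to\infty$, yields $|f(\eta_1)-f(\eta_2)|\leq\epsilon$, as required. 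The main technical obstacle I anticipate is the identification carried out in paragraph two: one has to verify carefully, using the tree-likeness of $B(v,r')$ together with the finite-range bookkeeping for interactions, that the conditional specification arising from the derived Hamiltonian~\eqref{pull back hamiltonian} really does coincide, under the edge-coloured isomorphism, with the tree specification $\mu_{\Phi,T_d(r),\xi}$.
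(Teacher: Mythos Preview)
Your proposal is correct and follows essentially the same approach as the paper: you write both conditional probabilities as averages of $f(\eta)=\mu_n(\sigma_v=a\mid\tau_{S_1},\eta_{B(v,r)^c})$, identify $f$ with a tree specification via the edge-coloured isomorphism and Lemma~\ref{intermediate lemma 1}, absorb the conditioning on $S_1$ into a limit of self-interactions, and invoke the uniformity of strong spatial mixing in $\Psi$. The only organizational difference is that the paper first isolates an intermediate statement (the analogue of strong spatial mixing for $\mu_{n,\Psi}$ on $G_n$ with arbitrary self-interactions) and then deduces the lemma from it, whereas you fold these two steps together; the ideas and the technical ingredients are identical.
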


\begin{proof}
There are two parts to the proof.

\vspace{7pt}

\emph{Part 1.}\quad First, we show that if $\Phi$ exhibits strong spatial mixing, then $\mu_{n}$ satisfies another related condition.  To formulate it, suppose that $\Psi = \{\Psi_{v}: v \in V_{n}\}$ is any given set of self-interactions on $G_{n}$, and write $\mu_{n, \Psi}$ for the Gibbs measure
\begin{equation} \label{nth Gibbs measure with self interactions}
	\mu_{n, \Psi}(\sigma) = Z_{n, \Psi}^{-1} \exp \left\{U_{n}^{\Phi}(\sigma) + \sum_{v \in V_{n}} \Psi_{v}(\sigma_{v})\right\}.
\end{equation}
Observe that $\mu_{n}^{\Phi}$ is supported on $\Omega_{n}$. We now prove the following:

\begin{quote}
\emph{For any $\epsilon > 0$ there exist $r \in \mathbb{N}$ and a sequence of subsets $W_n \subseteq V_n$ such that
\begin{itemize}
\item[(i)] $U_n(W_n) = 1 - o(1)$ and
\item[(ii)] every $v \in W_n$ satisfies
\begin{equation} \label{ssm desired}
\max_{\eta, \tau \in \Omega_{n},\, a\in A} \left|\mu_{n, \Psi}\left(\sigma_{v}=a|\eta_{B(v, r)^{c}}\right) - \mu_{n, \Psi}\left(\sigma_{v}=a|\tau_{B(v, r)^{c}}\right)\right| < \epsilon
\end{equation} 
for any set of self-interactions $\Psi$ on $G_n$.
\end{itemize}}
\end{quote}

Note the important feature that $r$ and $W_n$ are independent of the set of self-interactions $\Psi$.

\par Since we assume that $\Phi:\bigcup_{F\in\mathcal{F}}A^F \to [-\infty, \infty)$ has finite range, there is an $l \in \mathbb{N}$ such that every $F \in \mathcal{F}$ satisfies
\begin{equation} \label{contained in slightly bigger ball}
F \cap T_{d}(r) \neq \emptyset \implies F \subseteq T_{d}(r+l).
\end{equation}

\par Recall the formula~\eqref{pull back hamiltonian} for $U_n^\Phi(\sigma)$, and consider any $v \in \widetilde{V}_{n}^{(r+l)}$.  From \eqref{contained in slightly bigger ball} it follows that if a summand $\Phi\Big(\big(\Pi^{\gamma_n}_v(\sigma)\big)\big|_F\Big)$ in \eqref{pull back hamiltonian} involves any coordinates of the restriction $\sigma_{B(v,r)}$, then
\[\left\{\gamma_{n}(g)(v): g \in F\right\}  \subseteq B(v,r+l),\]
and therefore that summand depends only on the slightly larger restriction $\sigma_{B(v,r+l)}$.  From this and the definitions of $\mu_{n}$ and $\mu_{n, \Psi}$, it follows that the conditional measure $\mu_{n, \Psi}\left(\cdot|\eta_{B(v, r)^{c}}\right)$ is really a function of only $\eta_{B(v, r)^{c} \cap B(v, r+l)}$.

Now let $\Theta = \{\Theta_w:\ w\in V(T_d)\}$ be any set of self-interactions on $T_d$ chosen so that the isomorphism $B(v,r+l) \cong T_d(r+l)$ identifies $\{\Psi_u:\ u \in B(v,r+l)\}$ with $\{\Theta_w:\ w\in T_d(r+l)\}$.  Then the same reasoning as above applies to the conditional measures $\mu_{\Phi + \Theta,T_d(r),\zeta}$ over the tree itself.  For any $\eta \in \Omega_{n,B(v,r)^c}$, let $\xi \in \Omega_{T_d(r)^c}$ be provided by Lemma~\ref{intermediate lemma 1}.  Then we obtain
\begin{align} \label{reducing to tree case}
\mu_{n, \Psi}\left(\sigma_{v}=a|\eta_{B(v, r)^{c}}\right) &= \mu_{n, \Psi}\left(\sigma_{v}=a|\eta_{B(v, r)^{c} \cap B(v, r+l)}\right) \nonumber\\
&= \mu_{\Phi + \Psi,T_d(r),\xi}(\sigma_{\phi}=a).
\end{align}

\par By \eqref{reducing to tree case} and the strong spatial mixing of $\Phi$, it follows that if $r$ is sufficiently large then for all $v \in \widetilde{V}_{n}^{(r+l)}$ we have
\begin{align}
& \max_{\eta,\tau \in \Omega_n, a \in A}\left|\mu_{n, \Psi}\left(\sigma_{v} = a|\eta_{B(v, r)^{c}}\right) - \mu_{n, \Psi}\left(\sigma_{v} = a|\tau_{B(v, r)^{c}}\right)\right| \nonumber\\
=& \max_{\xi,\zeta \in \Omega_{T_{d}(r)^{c}}, a \in A}\left|\mu_{\Phi + \Psi,T_d(r),\xi}(\sigma_{\phi} = a) - \mu_{\Phi + \Psi,T_d(r),\zeta}(\sigma_{\phi} = a)\right| \leq \epsilon. \nonumber
\end{align}
Crucially, this $r$ does not depend on $n$, since it is derived from Definition \ref{strong spatial mixing}. Letting $W_n = \widetilde{V}_n^{(r+l)}$ for this choice of $r$ completes the proof of the desired condition.

\vspace{7pt}

\emph{Part 2.}\quad Fix $\epsilon > 0$, and let $r \in \mathbb{N}$ and subsets $W_n \subseteq V_n$ be given by the condition proved in Part 1.  We now show that, with this choice of $r$, all vertices in $W_n$ also satisfy the estimate inside \eqref{int lem eq}.

\par So now suppose that $v \in V_n$, that $S\subseteq V_n\setminus \{v\}$, and that $\eta \in A^{V_n}$.  Let us abbreviate $B = B(v,r)$, and let $S_1 = S\cap B$.  By the tower property of conditional probability, we may express $\mu_n(\sigma_v=a|\eta_S)$ by first conditioning on all spins in $S \cup B^c$, and then integrating over the values of those not in $S$.  This gives
\begin{equation} \label{eq 1 int lem}
\mu_{n}\left(\sigma_{v}=a|\eta_S\right) = \int_{A^{B^{c}}} \mu_n(\sigma_v=a|\eta_{S_1},\tau_{B^c}) \ d\mu_{n}\left(\tau_{B^{c}}|\eta_S\right),
\end{equation} 
and similarly
\begin{equation} \label{eq 2 int lem}
\mu_{n}\left(\sigma_{v}=a|\eta_{S_1}\right) = \int_{A^{B^{c}}} \mu_n(\sigma_v=a|\eta_{S_1},\tau'_{B^c}) \ d\mu_{n}\left(\tau'_{B^{c}}|\eta_{S_1}\right).
\end{equation}
Observe that the first of these integrals is concentrated on $\Omega_{n,B^{c}}$, while the second may give positive mass to a larger set.

Now let $c > 0$ be a parameter, and consider the set of self interactions $\{\Psi_{u}: u \in V_n\}$ given by
\[\Psi_u(a) = \left\{\begin{array}{ll}c &\quad \hbox{if}\ u \in S_1\ \hbox{and}\ a = \sigma_u\\ 0&\quad \hbox{otherwise.}\end{array}\right.\]
(In particular, $\Psi_u \equiv 0$ for $u \in V_n\setminus S_1$.) Then for any $\tau_{B^c} \in A^{B^c}$ we obtain
\[\mu_{n}\left(\sigma_{v}=a|\eta_{S_1},\tau_{B^c}\right) = \lim_{c\to \infty} \mu_{n, \Psi}\left(\sigma_{v}=a|\tau_{B^c}\right).\]
In the case of allowing hard-core constraints, this is true only if $\tau_{B^{c}} \in \Omega_{n,B^{c}}$. But that does not create a problem, since in \eqref{eq 1 int lem} we integrate with respect to $d\mu_{n}\left(\tau_{B^{c}}|\eta_{S}\right)$, which is concentrated on $\Omega_{n,B^{c}}$. A similar argument applies to \eqref{eq 2 int lem}.

\par Since~\eqref{ssm desired} holds uniformly for all sets of self-interactions, we deduce from this that
\begin{equation} \label{favouring magnetic fields}
	|\mu_n(\sigma_v =a|\eta_{S_1},\tau_{B^c}) - \mu_n(\sigma_v=a|\eta_{S_1},\tau'_{B^c})| \leq \epsilon
\end{equation}
for all $v \in W_n$ and $\tau_{B^c},\tau'_{B^c} \in \Omega_{n,B^c}$.

Combining \eqref{favouring magnetic fields} with \eqref{eq 1 int lem} and \eqref{eq 2 int lem}, we obtain
\begin{align*}
&|\mu_n(\sigma_v=a|\eta_S) - \mu_n(\sigma_v=a|\eta_{S_1})|\\
&= \left|\int \mu_n(\sigma_v=a|\eta_{S_1},\tau_{B^c}) \ d\mu_{n}\left(\tau_{B^{c}}|\eta_{S}\right) - \int \mu_n(\sigma_v=a|\eta_{S_1},\tau'_{B^c}) \ d\mu_{n}\left(\tau'_{B^{c}}|\eta_{S_1}\right)\right|\\
&\leq \iint |\mu_n(\sigma_v=a|\eta_{S_1},\tau_{B^c}) - \mu_n(\sigma_v=a|\eta_{S_1},\tau'_{B^c})|\ d\mu_{n}\left(\tau_{B^{c}}|\eta_{S}\right)\ d\mu_{n}\left(\tau'_{B^{c}}|\eta_{S_1}\right)\\
&\leq \epsilon.
\end{align*}

Since $\epsilon > 0$ was arbitrary, this completes the proof.
\end{proof}

We can now prove that~\eqref{limit} holds in case $\Phi$ exhibits strong spatial mixing.

\begin{proof}[Proof of Theorem~\ref{main}, second part]
From the first part of the proof, we see we need only establish equality in \eqref{step 3}.  To do this, we show that, for any $\epsilon > 0$, there is an $r \in \mathbb{N}$ such that
\begin{multline*}
\bigg|\int_{0}^{1} \int_{S \subseteq V_{n}} E_I\left[H_{\mu_{n}}\left(\sigma_{I}|\sigma_{S \backslash I}\right)\right] d\theta_{p} dp \\ - \int_{0}^{1} \int_{S \subseteq V_{n}} E_I\left[H_{\mu_{n}}\left(\sigma_{I}|\sigma_{S \cap B(I, r) \backslash I}\right)\right] d\theta_{p} dp\bigg| \leq \epsilon
\end{multline*}
for all $n$. Now, for every fixed $v \in V_{n}$ and $\eta \in A^{V_n}$, the conditional entropy \[H_{\mu_{n}}\left(\sigma_{v}|\sigma_{S \backslash v} = \eta_{S\setminus v}\right)\]
is a continuous and bounded function of the conditional distribution
\[\mu_{n}(\sigma_{v} = \cdot\, |\sigma_{S \backslash v} = \eta_{S\setminus v}),\]
and similarly
\[H_{\mu_{n}}\left(\sigma_{v}|\sigma_{S \cap B(v, r) \backslash v} = \eta_{S\cap B(v,r)\setminus v}\right)\]
is a continuous and bounded function of \[\mu_{n}(\sigma_{v}= \cdot\,|\sigma_{S \cap B(v, r) \backslash v} = \eta_{S\cap B(V,r)\setminus v}).\]
Using this fact and Lemma \ref{intermediate lemma}, we can find a large $R$ such that for all $r \geq R$ we have
$$\left|H_{\mu_{n}}\left(\sigma_{v}|\sigma_{S \backslash v}\right) - H_{\mu_{n}}\left(\sigma_{v}|\sigma_{S \cap B(v, r) \backslash v}\right)\right| \leq \epsilon \text{ for all } S \subseteq V_{n}$$
whenever $v$ is in the high-probability set of that lemma.  Averaging over $v$, this gives
$$\max_{S\subseteq V_n}\left|\frac{1}{|V_n|}\sum_{v \in V_{n}} H_{\mu_{n}}\left(\sigma_{v}|\sigma_{S \backslash v}\right) - \frac{1}{|V_n|}\sum_{v \in V_{n}} H_{\mu_{n}}\left(\sigma_{v}|\sigma_{S \cap B(v, r) \backslash v}\right)\right| \leq \epsilon + o(1).$$
Finally, repeating the calculation that led to~\eqref{step 2}, the last estimate turns into
$$\left|\int_{S \subseteq V_{n}} E_I\left[H_{\mu_{n}}\left(\sigma_{I}|\sigma_{S \backslash I}\right)\right] d\theta_{p} - \int_{S \subseteq V_{n}} E_I\left[H_{\mu_{n}}\left(\sigma_{I}|\sigma_{S \cap B(I, r) \backslash I}\right)\right] d\theta_{p}\right| \leq \epsilon + o(1)$$
for any $r \geq R$. This completes the proof.
\end{proof}

\section{Examples}\label{sec:egs}
	We discuss here a few examples of models that exhibit strong spatial mixing.
	
\subsection{Ising model}\label{Ising}
The Ising model on the tree $T_{d}$ has $A = \{1, -1\}$ and $\mathcal{F} = E(T_{d})$, the edge-set of $T_{d}$. Let $\beta\Phi$ be the Ising interaction, where $\beta$ is the inverse temperature. This model exhibits strong spatial mixing for all $\beta$ below the uniqueness threshold. The uniqueness threshold of the Ising model can be found in part (a) of Theorem~12.31 of \cite{georgii}. The strong spatial mixing behaviour in this regime can be deduced from Lemma 4.1 of \cite{glauber}, which states the following. 

\begin{lemma}\label{lem:Ising}
Let $T$ be a finite tree, and $v \neq w$ vertices in $V(T)$. Let $\{\beta_{e}: e \in E(T)\}$ be ferromagnetic interactions on $T$, and let $\{\Psi_u: u \in V(T)\}$ be finite-valued external fields (self-interactions) at the vertices of $T$. Let $\mu_{+, \Psi}$ (respectively $\mu_{-, \Psi}$) denote the Gibbs measure with these interactions and external fields $\Psi$, conditioned on $\sigma_{v} = 1$ (respectively $\sigma_{v} = -1$). Then, for fixed interactions $\beta_e$ and $a \in A$, the difference
\[\mu_{+, \Psi}(\sigma_{w} = 1) - \mu_{-, \Psi}(\sigma_{w} = 1)\]
achieves its maximum when $\Psi \equiv 0$.
\end{lemma}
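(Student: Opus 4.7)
The plan is to exploit the tree structure of $T$ to reduce the claim to a one-dimensional Ising chain, and then analyze that chain by a transfer-matrix recursion. Let $P = (v = v_0, v_1, \dots, v_k = w)$ be the unique path from $v$ to $w$ in $T$. Removing the edges of $P$ disconnects $T$ into $P$ together with a collection of subtrees, each attached to $P$ at exactly one vertex $v_i$. Since such a subtree interacts with the rest of $T$ only through $\sigma_{v_i}$, summing over its spins produces a factor depending only on $\sigma_{v_i}$, which can be absorbed into an effective external field $\tilde\Psi_{v_i} \in \mathbb{R}$ at $v_i$. The resulting system is exactly a one-dimensional Ising chain on $P$ with couplings $\beta_e$ and fields $\tilde\Psi_{v_i}$. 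Crucially, the global spin-flip symmetry forces each marginalized factor to be even in $\sigma_{v_i}$ when $\Psi \equiv 0$, so the origin maps to the origin; and by varying $\Psi_{v_i}$ alone one can realize any vector $\tilde\Psi \in \mathbb{R}^{V(P)}$. It therefore suffices to prove the lemma for a one-dimensional Ising chain.

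For the chain, let $H_i^\pm$ denote the effective field at $v_i$ under the conditioning $\sigma_{v_0} = \pm 1$, obtained by integrating out $\sigma_{v_1}, \dots, \sigma_{v_{i-1}}$ one at a time. A standard transfer-matrix computation gives $\tanh H_0^\pm = \pm 1$ and
\[
H_{i+1}^\pm \;=\; \tilde\Psi_{v_{i+1}} + \operatorname{atanh}\!\bigl(\tanh H_i^\pm \cdot \tanh\beta_{v_iv_{i+1}}\bigr),
\]
together with $\mu_{\pm,\tilde\Psi}(\sigma_w = 1) = (1 + \tanh H_k^\pm)/2$. Writing $H_k^0 = (H_k^+ + H_k^-)/2$ and $\delta_k = (H_k^+ - H_k^-)/2 \ge 0$, the difference of interest becomes
\[
\mu_{+,\tilde\Psi}(\sigma_w = 1) - \mu_{-,\tilde\Psi}(\sigma_w = 1) \;=\; \tfrac12\bigl(\tanh(H_k^0+\delta_k) - \tanh(H_k^0 - \delta_k)\bigr) \;\le\; \tanh\delta_k,
\]
where the inequality is the pointwise fact that $h \mapsto \tanh(a+h) - \tanh(-a+h)$ is even in $h$ with maximum $2\tanh a$ at $h = 0$: its derivative is $\operatorname{sech}^2(a+h) - \operatorname{sech}^2(-a+h)$, which has sign opposite to $h$ because $\operatorname{sech}^2$ is even and strictly decreasing on $[0,\infty)$.

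It remains to show $\delta_k \le \operatorname{atanh}\!\bigl(\prod_{i=0}^{k-1}\tanh\beta_{v_iv_{i+1}}\bigr)$, with equality when $\tilde\Psi \equiv 0$. I would prove this by induction on $k$. Since $\tilde\Psi_{v_i}$ enters $H_i^\pm$ additively, it cancels in $\delta_i$, so $\delta_i$ is a function of $\delta_{i-1}$ and $H_{i-1}^0$ alone. Holding $\delta_{i-1}$ fixed and differentiating the recursion in $H_{i-1}^0$, one finds that $H_{i-1}^0 = 0$ is the unique critical point, and an elementary calculation shows that $y \mapsto \tanh\beta\cdot(1 - \tanh^2 y)/(1 - \tanh^2\beta \cdot \tanh^2 y)$ is strictly decreasing in $|y|$, so $H_{i-1}^0 = 0$ is indeed a maximum. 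Setting $H_{i-1}^0 = 0$ collapses the recursion to $\delta_i = \operatorname{atanh}(\tanh\delta_{i-1}\tanh\beta_{v_{i-1}v_i})$, which is strictly increasing in $\delta_{i-1}$; combining these monotonicities with the inductive hypothesis completes the step, and iterating from $\tanh\delta_0 = 1$ at $\tilde\Psi \equiv 0$ yields the product formula.

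The main (and essentially only) technical obstacle is the second-order check that the critical point $H_{i-1}^0 = 0$ of the $\delta_i$-recursion is a maximum rather than a saddle; this is precisely the monotonicity of the displayed function of $y$ above. All other steps — side-tree integration, the transfer-matrix recursion, and the one-variable inequality for $\tanh$ — are routine, and the argument degenerates cleanly at the base case $k = 1$, where it reduces to the single inequality $\tanh(\beta + h) - \tanh(-\beta + h) \le 2\tanh\beta$.
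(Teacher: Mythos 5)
Your proposal is correct, but there is nothing in the paper to compare it against line by line: the paper does not prove Lemma~\ref{lem:Ising} at all, it simply imports it as Lemma 4.1 of \cite{glauber} and uses the statement as a black box in Section~\ref{Ising}. What you have written is therefore a self-contained substitute for that citation, and it follows the natural route: integrate out the subtrees hanging off the $v$--$w$ path into effective single-site fields (observing that spin-flip symmetry sends $\Psi\equiv 0$ to $\tilde\Psi\equiv 0$), then run the cavity recursion $H_{i+1}^{\pm}=\tilde\Psi_{v_{i+1}}+\operatorname{atanh}\bigl(\tanh\beta_{v_iv_{i+1}}\tanh H_i^{\pm}\bigr)$ and show that, for fixed half-difference $\delta_{i-1}$, the next half-difference is maximized at $H_{i-1}^0=0$, giving $\tanh\delta_k\le\prod_i\tanh\beta_{v_iv_{i+1}}$ with equality at zero field. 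I checked the two analytic ingredients — that $h\mapsto\tanh(a+h)-\tanh(h-a)$ peaks at $h=0$ with value $2\tanh a$, and that $g'(y)=\tanh\beta\,(1-\tanh^2y)/(1-\tanh^2\beta\,\tanh^2y)$ is even and strictly decreasing in $|y|$ (its derivative in $t=\tanh^2 y$ is $(\tanh^2\beta-1)/(1-t\tanh^2\beta)^2<0$) — and both are right, as is the logic that the value $\prod_i\tanh\beta_{v_iv_{i+1}}$ attained at $\tilde\Psi\equiv 0$ dominates every other field configuration. A few routine points deserve a sentence each in a final write-up: (i) $\delta_i\ge 0$ should be justified (immediate by induction, since the recursion is increasing in $H$ when $\beta_e\ge 0$); (ii) the conditioned vertex is cleaner if you start the induction at $\delta_1=\beta_{v_0v_1}$, which holds for every field, rather than manipulating the formal values $H_0^{\pm}=\pm\infty$; (iii) the degenerate cases $\beta_e=0$ (and $\delta_{i-1}=0$) should be noted, where the bound is trivially $0$ and attained at $\Psi\equiv 0$; and (iv) the surjectivity of $\Psi\mapsto\tilde\Psi$ is not actually needed — it suffices that every $\Psi$ induces some $\tilde\Psi$ and that $\Psi\equiv 0$ induces $\tilde\Psi\equiv 0$.
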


\par Now consider again the model on $T_d$. Since interactions are only defined on edges, each conditional probability $\mu_{\Phi + \Psi, T_{d}(r), \eta}\left(\sigma_{\phi} = 1\right)$ is really a function of $\eta_{T_{d}(r)^{c} \cap T_{d}(r+1)}$, which is a configuration on the finite annulus between $T_{d}(r)$ and $T_{d}(r+1)$. Thus, if $\eta, \tau \in A^{T_{d}(r)^c}$ differ at exactly one $v \in T_{d}(r)^{c} \cap T_{d}(r+1)$, then Lemma~\ref{lem:Ising} gives
\begin{align}
& \left|\mu_{\Phi + \Psi, T_{d}(r), \eta}\left(\sigma_{\phi} = 1\right) - \mu_{\Phi + \Psi, T_{d}(r), \tau}\left(\sigma_{\phi} = 1\right)\right| \nonumber\\
\leq & \left|\mu_{\Phi, T_{d}(r), \eta}\left(\sigma_{\phi} = 1\right) - \mu_{\Phi, T_{d}(r), \tau}\left(\sigma_{\phi} = 1\right)\right|. \nonumber
\end{align}
Since this is a ferromagnetic model, the more positive boundary condition must give a larger probability of spin $+1$ at any other vertex, and therefore the last absolute value is simply equal to
\[\mu_{\Phi, T_{d}(r), \eta}\left(\sigma_{\phi} = 1\right) - \mu_{\Phi, T_{d}(r), \tau}\left(\sigma_{\phi} = 1\right).\]

Now suppose we have two configurations $\eta, \tau \in A^{T_{d}(r)^c}$ such that for every $v \in T_{d}(r)^{c} \cap T_{d}(r+1)$ we have $\eta_{v} \geq \tau_{v}$. Let us define $$\Delta(\eta, \tau) = \left\{v \in T_{d}(r)^{c} \cap T_{d}(r+1): \eta_{v} > \tau_{v}\right\},$$ and let us enumerate the vertices in $\Delta(\sigma, \tau)$ as $v_{1}, \ldots ,v_{N}$. Let $\tau^{(1)}, \ldots \tau^{(N-1)}$ be the configurations defined as follows:
$$\tau^{(1)}(u) = \tau(u) \text{ if and only if } u \neq v_{1},$$ and for all $1 \leq i \leq N-2$,
$$\tau^{(i+1)}(u) = \tau^{(i)}(u) \text{ if and only if } u \neq v_{i+1}.$$ Set $\tau^{(0)} = \tau$ and $\tau^{(N)} = \eta$. Clearly, every consecutive pair $(\tau^{(i)}, \tau^{(i+1)}), 0 \leq i \leq N-1$, differs only at the single vertex $v_{i+1}$. Hence the estimate above gives
\begin{align}
& \left|\mu_{\Phi + \Psi, T_{d}(r), \tau^{(i+1)}}\left(\sigma_{\phi} = 1\right) - \mu_{\Phi + \Psi, T_{d}(r), \tau^{(i)}}\left(\sigma_{\phi} = 1\right)\right| \nonumber\\
=& \mu_{\Phi + \Psi, T_{d}(r), \tau^{(i+1)}}\left(\sigma_{\phi} = 1\right) - \mu_{\Phi + \Psi, T_{d}(r), \tau^{(i)}}\left(\sigma_{\phi} = 1\right) \nonumber\\
\leq & \mu_{\Phi, T_{d}(r), \tau^{(i+1)}}\left(\sigma_{\phi} = 1\right) - \mu_{\Phi, T_{d}(r), \tau^{(i)}}\left(\sigma_{\phi} = 1\right). \nonumber
\end{align}
Summing over all $0 \leq i \leq N-1$, we get:
\begin{align}\label{step 1 Ising}
& \mu_{\Phi + \Psi, T_{d}(r), \eta}\left(\sigma_{\phi} = 1\right) - \mu_{\Phi + \Psi, T_{d}(r), \tau}\left(\sigma_{\phi} = 1\right) \nonumber\\
& \leq \mu_{\Phi, T_{d}(r), \eta}\left(\sigma_{\phi} = 1\right) - \mu_{\Phi, T_{d}(r), \tau}\left(\sigma_{\phi} = 1\right). 
\end{align}

Now consider \emph{any} two configurations $\eta, \tau \in A^{T_{d}(r)^c}$. Form a new configuration $\zeta \in A^{T_{d}(r)^c}$ such that for all $u \in T_{d}(r)^{c} \cap T_{d}(r+1)$, we have $\zeta_{u} = \max\{\eta_{u}, \tau_{u}\}$. Then the estimate above gives
\begin{align}\label{step 2 Ising}
& \mu_{\Phi + \Psi, T_{d}(r), \zeta}\left(\sigma_{\phi} = 1\right) - \mu_{\Phi + \Psi, T_{d}(r), \eta}\left(\sigma_{\phi} = 1\right) \nonumber\\
& \leq \mu_{\Phi, T_{d}(r), \zeta}\left(\sigma_{\phi} = 1\right) - \mu_{\Phi, T_{d}(r), \eta}\left(\sigma_{\phi} = 1\right), 
\end{align}
and
\begin{align}\label{step 3 Ising}
& \mu_{\Phi + \Psi, T_{d}(r), \zeta}\left(\sigma_{\phi} = 1\right) - \mu_{\Phi + \Psi, T_{d}(r), \tau}\left(\sigma_{\phi} = 1\right) \nonumber\\
& \leq \mu_{\Phi, T_{d}(r), \zeta}\left(\sigma_{\phi} = 1\right) - \mu_{\Phi, T_{d}(r), \tau}\left(\sigma_{\phi} = 1\right). 
\end{align}

If $\beta$ lies in the uniqueness regime for this model, then the unique Gibbs measure must be trivial on the tail $\sigma$-algebra (see Theorem 7.7, part (a), of \cite{georgii}), and this implies that the upper bounds in~\eqref{step 2 Ising} and \eqref{step 3 Ising} both tend to $0$ as $r \rightarrow \infty$. Now~\eqref{step 2 Ising} and~\eqref{step 3 Ising} turn this into a uniform rate of convergence for all possible self-interactions, as required for strong spatial mixing.

\subsection{Independent set counting}\label{indep set}
		For a finite graph $G$, this model is concerned with counting independent subsets of vertices. See~\cite{weitz} for more background. An activity parameter $\lambda > 0$ is fixed, and the probability of $I$, for every independent set $I \subset G$, is $\lambda^{|I|}/Z$, where
		\[Z = \sum_{\substack{I \text{ finite independent set}}}\lambda^{|I|}.\]
		To conform with the notation of the present paper, we identify a subset $I \subset G$ with its indicator function, thought of as a configuration $\sigma \in \{0,1\}^V$. If a vertex $v \in V$ appears in the random set $I$, or equivalently satisfies $\sigma_v = 1$, then we say $v$ is \emph{occupied}.

The thermodynamic formalism gives the extension of this model to an infinite graph $G$.  For $\mathcal{F}$ we take the set of all edges and all vertices, i.e.\ $\mathcal{F} = E \cup V$. So, for any $S \in \mathcal{F}$, $S$ can either be a singleton set, i.e.\ a vertex, or a pair $\{u, v\}$ such that $\{u, v\}$ forms an edge of the graph. We define, for $\sigma \in \{0, 1\}^V$ and $S = \{v\}$ a vertex, the interaction
\[
 \Phi(\sigma_{S}) = 
  \begin{cases} 
   \log \lambda & \text{if } \sigma(v) = 1, \\
   0       & \text{if } \sigma(v) = 0;
  \end{cases}
\]
and for $S = \{u, v\}$ an edge of the graph, we define
\[
 \Phi(\sigma_{S}) = 
  \begin{cases} 
   -\infty & \text{if } \sigma(u) = \sigma(v) = 1, \\
   0      & \text{otherwise. }
  \end{cases}
\]
Let $\Lambda \subseteq V$ be any given subset of vertices, and let $\eta \in \{0,1\}^{\Lambda}$ be a configuration over the vertices in $\Lambda$. If $\eta$ is the indicator of an independent subset for the induced graph on $\Lambda$, then we call it \emph{legitimate}. In case $V\setminus \Lambda$ is finite and $v \in V\setminus \Lambda$, let $\mu_{\Phi, \Lambda^c, \eta}\left(\sigma_{v} = 1\right)$ denote the conditional probability that $v$ is occupied, given that the configuration on $\Lambda$ is $\eta$. The following result is taken from Proposition 2.5 of \cite{weitz}.

\begin{prop}
When $\lambda$ is less than $\lambda_{c}(d) = (d-1)^{d-1}/(d-2)^{d}$, the independent set model on $T_d$ with activity parameter $\lambda$ satisfies the following:  for every $v \in V$, co-finite $\Lambda \subseteq V\setminus \{v\}$, and any two legitimate configurations $\eta, \tau$ on $\Lambda$,
		\begin{equation}
			\left|\mu_{\Phi, \Lambda^c, \eta}\left(\sigma_{v} = 1\right) - \mu_{\Phi, \Lambda^c, \tau}\left(\sigma_{v} = 1\right)\right| = o\left(\rho(v, \Delta)\right),
		\end{equation}
where $\Delta \subseteq \Lambda$ is the subset on which $\eta$ and $\tau$ differ, and $\rho(v, \Delta)$ is the distance between $v$ and $\Delta$ in the graph $G$.
\end{prop}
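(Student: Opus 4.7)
The plan is to exploit the tree structure through the standard recursion on occupation ratios that underlies Kelly's original analysis and Weitz's refinements. Root $T_d$ at $v$; its neighbours $u_1,\dots,u_d$ each sit at the top of a subtree $T_{u_i}$ containing all vertices closer to $u_i$ than to $v$. For any rooted subtree $T_w$ and any legitimate boundary condition $\eta$ on $\Lambda\cap T_w$, define
\[
R^\eta_{T_w}(w) \;=\; \frac{\mu_{\Phi,\Lambda^c,\eta}(\sigma_w=1\mid T_w)}{\mu_{\Phi,\Lambda^c,\eta}(\sigma_w=0\mid T_w)},
\]
with the conventions $R\equiv 0$ if $w$ is fixed to $0$ by $\eta$ and $R\equiv\infty$ if fixed to $1$. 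Writing out the hard-core Gibbs weights shows directly that
\[
R^\eta_{T_w}(w) \;=\; \lambda\prod_{w'\in\mathrm{children}(w)}\frac{1}{1+R^\eta_{T_{w'}}(w')},
\]
and finally $\mu_{\Phi,\Lambda^c,\eta}(\sigma_v=1) = R^\eta_v/(1+R^\eta_v)$ with $R^\eta_v = \lambda\prod_{i=1}^d 1/(1+R^\eta_{T_{u_i}}(u_i))$. Thus proving the proposition reduces to controlling how the recursion at $v$ depends on its distant boundary.

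The next step is to show that this recursion is a strict contraction in a suitable coordinate when $\lambda<\lambda_c(d)$. Passing to the logarithmic variable $x=\log R$ linearises the product, and each partial derivative of one recursion step equals $-R(w')/(1+R(w'))\in(-1,0)$. A direct calculation at the fixed point $R^\ast$ of the single-variable map $R\mapsto\lambda/(1+R)^{d-1}$ shows that $(d-1)R^\ast/(1+R^\ast)<1$ precisely when $\lambda<\lambda_c(d)=(d-1)^{d-1}/(d-2)^d$. To upgrade this local contraction into one that is valid uniformly over all possible ratios produced by the boundary, I would introduce a monotone smooth potential $\Psi:[0,\infty]\to\mathbb{R}$ (for instance $\Psi(R)=2\,\mathrm{arcsinh}\sqrt{R}$, or the analogous choice tuned to $d$ used in the hard-core literature) chosen so that the conjugated map $\widetilde f=\Psi\circ f\circ\Psi^{-1}$, where $f$ is one step of the $(d-1)$-ary recursion, satisfies $|\widetilde f'|\le\alpha$ on all of $\mathbb{R}$ for some $\alpha=\alpha(\lambda,d)<1$.

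With such a potential in hand, a straightforward induction on the depth of the tree shows that if $\eta$ and $\tau$ agree on every vertex of $\Lambda$ within distance $r=\rho(v,\Delta)$ of $v$, then the corresponding ratios at $v$ are within $C\alpha^r$ of each other in the $\Psi$-metric. Since $\Psi$ is locally bi-Lipschitz on bounded subsets of $[0,\infty]$, this converts back to an $O(\alpha^r)$ bound on $|R^\eta_v-R^\tau_v|$, and hence through the $1$-Lipschitz map $R\mapsto R/(1+R)$ to the same bound on $|\mu_{\Phi,\Lambda^c,\eta}(\sigma_v=1)-\mu_{\Phi,\Lambda^c,\tau}(\sigma_v=1)|$, yielding in fact exponential decay in $\rho(v,\Delta)$.

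The main obstacle is choosing $\Psi$ and verifying that $\widetilde f$ contracts \emph{globally}, not merely near $R^\ast$. The linearisation near the fixed point is routine, but the ratios that actually appear in the recursion can lie anywhere in $[0,\infty]$, and naive bounds on $|f'|$ degenerate there. The technical heart of the argument, and the reason the specific value of $\lambda_c(d)$ matters, is to design $\Psi$ so that the ratio $\Psi'(f(R))/\Psi'(R)$ exactly compensates the variation of $|f'(R)|$ with $R$; everything else is bookkeeping.
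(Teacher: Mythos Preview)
The paper does not prove this proposition at all: it simply quotes it as Proposition~2.5 of Weitz~\cite{weitz} and moves on. So there is no proof in the paper to compare your attempt against.

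That said, your outline is the standard route to this result on trees and is correct as far as it goes. The recursion on ratios, the identification of the linearised contraction rate $(d-1)R^\ast/(1+R^\ast)$ at the fixed point, and the observation that this is strictly less than~$1$ exactly when $\lambda<\lambda_c(d)$ are all right. You also correctly flag where the real content lies: upgrading the local contraction near $R^\ast$ to a \emph{global} one via a well-chosen potential $\Psi$. Your sketch stops short of carrying this out, and that step is not soft---one has to exhibit a specific $\Psi$ and check the inequality $(d-1)\,\Psi'(f(R))\,|f'(R)|/\Psi'(R)\le\alpha<1$ uniformly in $R\in[0,\infty]$ (the choice you mention works, but the verification all the way up to $\lambda_c(d)$ is a genuine calculation). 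So your proposal is a correct plan rather than a proof; since the paper itself only cites the result, that is arguably already more than is required here.
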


This does not give strong spatial mixing as in our Definition~\ref{strong spatial mixing}: rather, it gives directly the variant of it in~\eqref{favouring magnetic fields} which concerns conditional distributions with two different boundary conditions.  From this we can obtain Theorem~\ref{main} for this model directly.  It should also be straightforward to recover strong spatial mixing for arbitrary finite self-interactions from this proposition.

\subsection{Colouring of graphs}\label{q colouring}
The $q$-colouring model considers proper colourings of the vertices of a graph $G = (V,E)$ by $q$ colours: that is, colourings in which no adjacent vertices have the same colour.  It is discussed in detail in \cite{ge2011}. This is another model with hard-core constraints: the interaction $\Phi$ is defined on edges of the graph, and any configuration in which two adjacent vertices have  the same colour gets weight $-\infty$. As a result, for any co-finite subset $\Lambda$ of $V$ and any proper $q$-colouring $\eta$ of the induced graph on $\Lambda$, the conditional distribution $\mu_{\Phi, \Lambda^c, \eta}$ is the uniform distribution on all proper $q$-colourings of $G$ which extend $\eta$ (unless there are no such extensions, in which case $\eta \not\in \Omega_{\Lambda}$ and this conditional distribution is not defined). The definition of strong spatial mixing in \cite{ge2011} is inspired by that in \cite{weitz}. The main result of \cite{ge2011} gives the following.

\begin{prop}
For $q \geq 1 + \lceil c (d-1)\rceil$, where $c \approx 1.764$ is the root of $c = \exp(1/c)$, the infinite volume Gibbs measure $\mu$ on $q$-colourings of $T_{d}$ is unique and exhibits the following property: for every co-finite $\Lambda \subseteq V$ and $v \in V\setminus \Lambda$, every colour $i$, and every pair of colourings $\eta, \tau \in \Omega_\Lambda$, we have
$$\left|\mu_{\Phi, \Lambda^c, \eta}\left(\sigma_{v} = i\right) - \mu_{\Phi, \Lambda^c, \tau}\left(\sigma_{v} = i\right)\right| \leq C \exp(-a \rho(v, \Delta))$$ for some positive constants $C$ and $a$, where $\Delta = \{u \in \Lambda: \eta_u \neq \tau_u\}$.
\end{prop}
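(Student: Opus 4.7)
The plan is to reduce strong spatial mixing for the $q$-colouring model on $T_d$ to a contraction estimate for the standard tree recursion on marginal colour distributions, and then to invoke the potential-function analysis of \cite{ge2011}. Because $T_d$ is a tree and $\Phi$ has only edge interactions, the conditional measure $\mu_{\Phi,\Lambda^c,\eta}$ factorises over the connected components of $T_d\setminus\Lambda$, so only the component containing $v$ matters. After rooting this component at $v$, any initial discrepancy between the boundary conditions $\eta$ and $\tau$ sits at vertices of $\Delta$, all of which are at depth at least $\rho(v,\Delta)$; a per-level contraction of the tree recursion will therefore translate this into the desired bound $C\exp(-a\rho(v,\Delta))$.

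Next I would set up the tree recursion. For any vertex $w$ in the rooted tree, write $p^{(w)} = (p^{(w)}_1,\dots,p^{(w)}_q)$ for the marginal of $\sigma_w$ under the conditional measure restricted to the subtree $T_w$ rooted at $w$. Conditioning on the colours of the children $w_1,\dots,w_k$ of $w$ gives
\begin{equation}
p^{(w)}_i \;=\; \frac{\prod_{j=1}^{k}\bigl(1 - p^{(w_j)}_i\bigr)}{\sum_{\ell=1}^{q}\prod_{j=1}^{k}\bigl(1 - p^{(w_j)}_\ell\bigr)},
\end{equation}
so $p^{(w)} = F_k\bigl(p^{(w_1)},\dots,p^{(w_k)}\bigr)$ for an explicit map $F_k$ on products of simplices; any child lying in $\Lambda$ is simply frozen at its boundary colour. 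The task reduces to bounding the sensitivity of $p^{(v)}$ to initial-data changes at depth at least $\rho(v,\Delta)$ in this recursion.

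The heart of the argument is to exhibit a metric $\delta_\Psi$ on the interior of the simplex and a constant $\lambda<1$ such that $F_k$ is a $\lambda$-contraction in $\delta_\Psi$ for every $k\leq d-1$, whenever $q \geq 1 + \lceil c(d-1)\rceil$ with $c = e^{1/c}$. Following \cite{ge2011} (who build on earlier work of Goldberg--Jerrum--Paterson and Jonasson), one passes to logarithmic message variables $R^{(w)}_i = \log p^{(w)}_i - \log p^{(w)}_1$, introduces a potential $\Psi$ on this coordinate space, and bounds the Jacobian of the transformed recursion in the weighted $\ell_1$ norm with weight $|\Psi'|$. The constant $c\approx 1.764$ emerges as the critical value of $q/(d-1)$ at which this weighted operator norm can still be driven strictly below $1$ by optimising $\Psi$, with $c=e^{1/c}$ appearing as the stationarity condition of that optimisation. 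Iterating the contraction along the paths from $\Delta$ up to $v$ and summing the resulting contributions then yields the claimed exponential bound with $a=-\log\lambda$; uniqueness of the infinite-volume Gibbs measure follows at once, because the same contraction forces the tail $\sigma$-algebra to be trivial.

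The main obstacle is designing the potential $\Psi$ sharply enough to reach the threshold $q = 1 + \lceil c(d-1)\rceil$: a direct $\ell_1$ or total-variation analysis yields contraction only for $q$ substantially larger than $c(d-1)$, and obtaining the optimal constant requires an extremal analysis of the worst-case children configurations together with a convex-analytic optimisation of $\Psi$, which is the technical heart of \cite{ge2011}.
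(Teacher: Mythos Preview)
The paper does not prove this proposition at all: it is stated verbatim as ``the main result of \cite{ge2011}'' and simply quoted, with no argument given. So there is no proof in the paper to compare your proposal against.

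That said, your sketch is a faithful outline of the strategy in \cite{ge2011}: reduction to the component of $v$, the standard tree recursion for colour marginals, and contraction in a potential-weighted metric yielding the exponential decay with $a = -\log\lambda$. Your identification of the critical threshold $c = e^{1/c}$ as arising from optimising the potential is also correct. The one caveat is that your write-up is explicitly a plan rather than a proof --- you defer the entire contraction estimate, which is the only nontrivial content, back to \cite{ge2011} --- so as a self-contained argument it is incomplete by design. For the purposes of this paper that is exactly what is needed, since the authors themselves treat the proposition as a black-box citation.
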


As for the independent set model, in our setting this leads directly to the estimate~\eqref{favouring magnetic fields}, and hence to Theorem~\ref{main}.

\subsection{Potts model}\label{Potts}
Let us consider the Potts model with $A = \{0, 1, \ldots q\}$ and $\mathcal{F} = E(T_{d})$, the edge set of $T_{d}$. Let $V:= V(T_d)$. The interaction is $\Phi(\sigma_{e}) = \beta \delta_{\sigma_{u},\sigma_{v}}$ where the end points of edge $e$ are $u$ and $v$. For this model, the strong spatial mixing behaviour can be shown, for sufficiently small $\beta$, using the same estimates as for the Dobrushin uniqueness theorem. (We thank Elchanan Mossel for pointing this out to us.)

We follow the treatment of Dobrushin's theorem in \cite{simon}. Let us name the nodes of $V$ as $\phi = v_{0}, v_{1}, \ldots$, in a breadth-first manner starting at the root, where $V(T_{d}(r)) = \{v_{0}, v_{1}, \ldots v_{N(r)}\}$. 

\par Fix an arbitrary set of self-interactions $\Psi = \{\Psi_{v}: v \in V\}$. In the following exposition, for any finite $S \subseteq V$ and $\eta \in A^{V \setminus S}$, we abbreviate the measure $\mu_{\Phi+\Psi, S, \eta}$ to $\mu_{S, \eta}$ for convenience of notation.

For any $i, j \geq 0$, define
$$\rho_{i,j} = \frac{1}{2}\sup\left\{||\mu_{v_{j}, \eta} - \mu_{v_{j}, \gamma}||: \eta, \gamma \in A^{V\setminus \{v_j\}},\ \eta_{v_{k}} = \gamma_{v_{k}} \text{ for all } k \neq i\right\}.$$
Simple estimates show that for $\beta$ sufficiently small we have
\begin{equation}
\alpha := \sup_{j \geq 0} \sum_{i \neq j} \rho_{i,j} < 1
\end{equation}
uniformly over all choices of self-interaction $\Psi$. Also, for any $g \in C\left(A^V\right)$, let $\delta_{v_{i}}(g) = \sup\left\{|g(\sigma) - g(\gamma)|: \sigma_{v_{k}} = \gamma_{v_{k}} \text{ for all } k \neq i\right\}$, and let 
\begin{equation*}
\Delta(g) = \sum_{i=0}^{\infty} \delta_{v_{i}}(g).
\end{equation*}

\par Next, for any $j \geq 0$, any $\sigma \in A^V$ and any $\omega \in A$, we define
\[
 (\omega|_{v_{j}} \sigma)_{v_{k}} 
  \begin{cases} 
   = \omega & \text{ if } j = k; \\
   = \sigma_{v_{k}} & \text{otherwise }.
  \end{cases}
\]
Then $\tau_{v_{j}}: C\left(A^V\right) \rightarrow C\left(A^V\right)$ is defined as
\begin{equation}
\tau_{v_{j}}(g)(\sigma) = \int_{A} g\left(\omega|_{v_{j}} \sigma\right) \ d\mu_{v_{j}, \sigma_{V \setminus \{v_{j}\}}}(\omega).
\end{equation}

For every $1 \leq i \leq N(r)$ and $\eta \in A^{T_d(r)^c}$, the transformation $\tau_{v_{i}}$ preserves the measure $\mu_{T_{d}(r), \eta}$. To see this, let $g \in C\left(A^V\right)$, and compute
\begin{align}
\tau_{v_{i}} g(\sigma \vee \eta) &= \int_{A} g\left(\omega|_{v_{i}} (\sigma \vee \eta)\right) \ d\mu_{v_{i}, (\sigma \vee \eta)_{V \setminus \left\{v_{i}\right\}}}(\omega) \nonumber\\
&= \mu_{v_{i}, \left(\sigma \vee \eta\right)_{V \setminus \left\{v_{i}\right\}}}(g) \nonumber\\
&= E\left[g|\mathcal{F}_{V \setminus \left\{v_{i}\right\}}\right](\sigma \vee \eta), \nonumber
\end{align}
where $\mathcal{F}_{S}$ for $S\subseteq V$ is the $\sigma$-field containing information about the spins on the vertices of $S$, and $E$ denotes conditional expectation according to the measure $\mu_{T_{d}(r), \eta}$. By the tower property of conditional expectations, we get:
\begin{multline}
\mu_{T_{d}(r), \eta}\left(\tau_{v_{i}}(g)\right) = E\left[E\left[g|\mathcal{F}_{V \setminus \left\{v_{i}\right\}}\right]|\mathcal{F}_{T_{d}(r)^{c}}\right](\eta) \\=  E\left[g|\mathcal{F}_{T_{d}(r)^{c}}\right](\eta) = \mu_{T_{d}(r), \eta}(g).
\end{multline}
Applying this fact for $i= N(r),N(r)-1,\dots,1$ in turn, we obtain
\begin{equation}\label{for my better understanding}
\mu_{T_{d}(r), \eta}\left(\tau_{v_{0}} \tau_{v_{1}} \ldots \tau_{v_{N(r)}} g\right) = \mu_{T_{d}(r), \eta}(g).
\end{equation} 

Now let us fix any $a \in A$ and consider the particular function $f \in C\left(A^V\right)$ defined by $f(\sigma) = \mathbf{1}_{\left\{\sigma_{v_{0}} = a\right\}}$. For any positive integer $n$, iterating equation~\eqref{for my better understanding} gives
\begin{equation*}
\mu_{T_{d}(r), \eta}\left(\left(\tau_{v_{0}} \ldots \tau_{v_{N(r)}}\right)^{n} f\right) = \mu_{T_{d}(r), \eta}(f)= \mu_{T_{d}(r), \eta}\left(\sigma_{v_{0}} = a\right).
\end{equation*}
Therefore, for any positive integer $n$, we obtain the estimate
\begin{align}\label{iterated Dobrushin}
& \sup_{\eta, \xi \in A^{T_d(r)^c}}\left|\mu_{T_{d}(r), \eta}\left(\sigma_{\phi} = a\right) - \mu_{T_{d}(r), \xi}\left(\sigma_{\phi} = a\right)\right| \nonumber\\
&= \sup_{\eta, \xi \in A^{T_d(r)^c}}\left|\mu_{T_{d}(r), \eta}\left((\tau_{v_{0}} \tau_{v_{1}} \ldots \tau_{v_{N(r)}})^nf\right) - \mu_{T_{d}(r), \xi}\left((\tau_{v_{0}} \tau_{v_{1}} \ldots \tau_{v_{N(r)}})^nf\right)\right| \nonumber\\
&\leq \sup_{\eta, \xi \in A^V}\left|(\tau_{v_{0}} \tau_{v_{1}} \ldots \tau_{v_{N(r)}})^nf(\eta) - (\tau_{v_{0}} \tau_{v_{1}} \ldots \tau_{v_{N(r)}})^nf(\xi)\right| \nonumber\\
&\leq \Delta\left((\tau_{v_{0}} \tau_{v_{1}} \ldots \tau_{v_{N(r)}})^nf\right)
\end{align}


On the other hand, since in the Potts models the interactions are edge-wise, it follows from the definition of $\tau_{v_{j}}$ that $(\tau_{v_{0}} \tau_{v_{1}} \ldots \tau_{v_{N(r)}})^{r'} f$ depends only on $\sigma_{T_{d}(r')}$ for each $r' \le r$. We now combine this fact with~\cite[Equation (V.1.17)]{simon}, obtaining the following inequalities for each $r'\leq r-1$:
\begin{align}
&\Delta\left(\left(\tau_{v_{0}} \tau_{v_{1}} \ldots \tau_{v_{N(r)}}\right)^{r'}f\right) \nonumber\\ &\leq \alpha \sum_{i=0}^{N(r)-1} \delta_{v_{i}}\left(\left(\tau_{v_{0}} \tau_{v_{1}} \ldots \tau_{v_{N(r)}}\right)^{r'-1}f\right) \nonumber\\& \quad + \sum_{N(r)+1}^{\infty}\delta_{v_{i}}\left(\left(\tau_{v_{0}} \tau_{v_{1}} \ldots \tau_{v_{N(r)}}\right)^{r'-1}f\right) \nonumber\\& \quad + \left\{\sum_{i=0}^{N(r)-1} \rho_{i, N(r)} \alpha + \sum_{N(r)+1}^{\infty} \rho_{i,N(r)}\right\} \delta_{v_{N(r)}}\left(\left(\tau_{v_{0}} \tau_{v_{1}} \ldots \tau_{v_{N(r)}}\right)^{r'-1}f\right) \nonumber\\
&\leq \alpha \Delta\left(\left(\tau_{v_{0}} \tau_{v_{1}} \ldots \tau_{v_{N(r)}}\right)^{r'-1}f\right).
\end{align}
Concatenating these inequalities, we conclude that
\[\Delta\left(\left(\tau_{v_{0}} \tau_{v_{1}} \ldots \tau_{v_{N(r)}}\right)^{r-1}f\right) \leq \alpha^{r-1}\]
Combining this with~\eqref{iterated Dobrushin}, we deduce that
\begin{equation*}
\sup_{\eta, \xi \in A^{T_{d}(r)^c}}\left|\mu_{T_{d}(r), \eta}\left(\sigma_{\phi} = a\right) - \mu_{T_{d}(r), \xi}\left(\sigma_{\phi} = a\right)\right| \leq \alpha^{r-1}.
\end{equation*}
Since $\alpha < 1$ and this estimate is uniform in $\Psi$, this establishes the desired strong spatial mixing for the Potts model on $T_{d}$ at sufficiently high temperature.

\bibliography{soficbibfile}

\begin{thebibliography}{10}

\bibitem{austin-sofic}
Tim Austin.
\newblock Additivity properties of sofic entropy and measures on model spaces.
\newblock {\em Forum Math. Sigma}, 4:e25, 79, 2016.

\bibitem{glauber}
Noam Berger, Claire Kenyon, Elchanan Mossel, and Yuval Peres.
\newblock Glauber dynamics on trees and hyperbolic graphs.
\newblock {\em Probability Theory and Related Fields}, 131(3):311--340, 2005.

\bibitem{harpe}
Pierre de~la Harpe.
\newblock {\em Topics in geometric group theory}.
\newblock Chicago Lectures in Mathematics. University of Chicago Press,
  Chicago, IL, 2000.

\bibitem{ge2011}
Qi~Ge and Daniel Stefankovic.
\newblock Strong spatial mixing of $ q $-colorings on bethe lattices.
\newblock {\em arXiv preprint arXiv:1102.2886}, 2011.

\bibitem{georgii}
Hans-Otto Georgii.
\newblock {\em Gibbs measures and phase transitions}, volume~9 of {\em de
  Gruyter Studies in Mathematics}.
\newblock Walter de Gruyter \& Co., Berlin, second edition, 2011.

\bibitem{kieffer}
John.~C. Kieffer.
\newblock A generalized {S}hannon-{M}c{M}illan theorem for the action of an
  amenable group on a probability space.
\newblock {\em Ann. Probability}, 3(6):1031--1037, 1975.

\bibitem{montanari-mossel-sly}
Andrea Montanari, Elchanan Mossel, and Allan Sly.
\newblock The weak limit of {I}sing models on locally tree-like graphs.
\newblock {\em Probab. Theory Related Fields}, 152(1-2):31--51, 2012.

\bibitem{ruelle}
David Ruelle.
\newblock {\em Thermodynamic formalism}.
\newblock Cambridge Mathematical Library. Cambridge University Press,
  Cambridge, second edition, 2004.
\newblock The mathematical structures of equilibrium statistical mechanics.

\bibitem{simon}
Barry Simon.
\newblock {\em The statistical mechanics of lattice gases}, volume~1.
\newblock Princeton University Press, 2014.

\bibitem{weitz}
Dror Weitz.
\newblock Counting independent sets up to the tree threshold.
\newblock In {\em S{TOC}'06: {P}roceedings of the 38th {A}nnual {ACM}
  {S}ymposium on {T}heory of {C}omputing}, pages 140--149. ACM, New York, 2006.

\end{thebibliography}

\end{document}